\begin{document}

\newtheorem{algorithm}{Algorithm}
\newtheorem{assumption}{Assumption}
\newtheorem{corollary}{Corollary}
\newtheorem{definition}{Definition}
\newtheorem{example}{Example}
\newtheorem{lemma}{Lemma}
\newtheorem{remark}{Remark}
\newtheorem{theorem}{Theorem}
\renewcommand\abstractname{Abstract}
\renewcommand\figurename{Figure}
\renewcommand\tablename{ Table}
\renewcommand\refname{References}

\title{State Estimation for Genetic Regulatory Networks with Time-Varying Delays and Reaction-Diffusion Terms}

\author{Yuanyuan Han, Xian~Zhang,
~\IEEEmembership{Member,~IEEE}, Ligang~Wu,~\IEEEmembership{Senior~Member,~IEEE} and Yantao Wang
\thanks{Y. Han, X. Zhang (Corresponding author) and Y. Wang are with the School
of Mathematical Science, Heilongjiang University, Harbin 150080, China,
Email: xianzhang@ieee.org.}
\thanks{L. Wu is with the Space Control and Inertial Technology Research Center,
Harbin Institute of Technology, Harbin, 150001, China, Email: ligangwu@hit.edu.cn.}
\thanks{This work was supported in part by the National Natural Science Foundation of China (11371006, 61174126 and 61222301),
the National Natural Science Foundation of Heilongjiang Province (F201326,A201416),
the Fund of Heilongjiang Education Committee (12541603),
the Fundamental Research Funds for the Central Universities (HIT.BRETIV.201303),
and the Heilongjiang University Innovation Fund for Graduates (YJSCX2015-033HLJU).
}
}


\maketitle

\begin{abstract}
This paper is concerned with the state estimation problem for genetic regulatory networks with time-varying delays and reaction-diffusion terms under Dirichlet boundary conditions. It is assumed that the nonlinear regulation function is of the Hill form. The purpose of this paper  is to design a state observer to estimate the concentrations of mRNA and protein through available measurement outputs.
By introducing new integral terms in a novel Lyapunov--Krasovskii functional and employing Wirtinger-based integral inequality, Wirtinger's inequality, Green's identity, convex combination approach, and reciprocally convex combination approach, an asymptotic stability criterion of the error system is established in terms of linear matrix inequalities
(LMIs). The obtained stability criterion depends on the upper bounds of the delays and their derivatives. It should be highlight that if the set of LMIs are feasible, the desired observer exists and can be determined. Finally, two numerical examples are presented to illustrate the effectiveness of the proposed designed scheme.

\begin{IEEEkeywords} Genetic regulatory networks, Reaction-diffusion terms, State estimation,
Wirtinger-based integral inequality.
\end{IEEEkeywords}
\end{abstract}

\IEEEpeerreviewmaketitle

\section{Introduction}\label{intro}
\IEEEPARstart{I}{}n
the last decades, due to the increasing progress in genome sequencing and gene recognition, genetic regulatory networks (GRNs) have become a significant area in biological and biomedical sciences. However, there still exists large gap between the genome sequencing and the understanding of gene functions which have become challenge problems in system biology. A great amount of experimental results show that mathematical modeling of GRNs can be a powerful tool for researching the gene regulation process and discovering
complex structure of a biological organism \cite{JCB2-2002-67,PSB-1999-17,PSB-1998-77}.
Generally, there are two basic models for GRNs: Boolean model (discrete-time model) \cite{Cao.Ren(2008),AAA-2014-257971} and differential equation model (continuous-time model) \cite{MPE-2014-768483,IEEE-T-CBB-2015-398,Zhang.Wu.Zou-IEEE-T-CBB}. Differential equation model describes the change rates of the concentrations of mRNAs and proteins. Furthermore, differential equation model has been most frequently utilized since it can more precisely describe the whole network and made it possible to understand the dynamic behavior of whole network in detail.

In biological systems, particularly in GRNs, stability is the most significant and essential dynamical behaviors \cite{IEEE-T-NNLS-2013-1957,IEEE-T-NNLS-2013-1177,NN-2014-165,IEEE-T-NNLS-2012-293}.
It is related to not only the structure and function of an organism but also the strength and characteristics of the external disturbances. In addition, as it is well known, time delays caused by the slow processes of transcription and translation in real GRNs. It has been well shown from present research results that time delay may lead to instability, bifurcation or oscillation for systems \cite{IEEE-T-2005-217,Li.Gao.Shi(2010),IEEE-T-IE-2012-2732,N-2015-199}. However, mathematical modeling of GRNs without introducing delays will lead to wrong predictions of the concentrations of mRNAs and proteins. Therefore, the problem of stability analysis for biological systems with time delays has stirred  increasing research interests and a great deal of excellent results has been reported in literature in recent decades (see, e.g., \cite{IEEE-T-CBB-2015-398,Zhang.Wu.Zou-IEEE-T-CBB,CSSP-2014-371,IEEE-T-SMC-B-2009-467,N-2013,NCA-2013,IEEE-T-AC-2013-475,IEEE-T-NN-2008-1299}).

In some mathematical modeling, it is  implicitly assumed that the genetic regulatory systems are spatially homogeneous, namely, the concentrations of mRNA and protein are homogenous in space at all times. However, there are some situations in which these assumptions are not reasonable. For instance, it might be necessary to consider the diffusion of regulatory proteins from one compartment to another \cite{JCB2-2002-67,JMB-1985-313,britton1986reaction,Han.Zhang.Wang-CSSP}.
In this situation, the general functional differential equation model can not precisely describe genetic regulatory process more or less. Hence, it is imperative to introduce reaction-diffusion terms in mathematical modeling of GRNs.
 To the best of authors' knowledge, the delayed GRNs with reaction-diffusion terms are only studied in \cite{Zhou.Xu.Shen(2011),NCA-2011-507,001,Han.Zhang.Wang-CSSP}. Ma \textit{et al}. \cite{NCA-2011-507} introduced reaction-diffusion terms to GRNs for the first time and established delay-dependent asymptotic stability criteria. Based on the Lyapunov functional method, Zhou, Xu and Shen \cite{Zhou.Xu.Shen(2011)} investigated finite-time robust stochastic stability criteria for uncertain GRNs with time-varying delays and reaction-diffusion terms. Han and Zhang \cite{001,Han.Zhang.Wang-CSSP} gradually improved Ma \textit{et al}.'s results by introducing novel Lyapunov--Krasovskii functional and employing Jens\-en's inequality, Wirtinger's inequality, Green's identity, convex combination approach and reciprocally convex combination (RCC) approach.

In complex biological networks such as neural networks and GRNs, it is often the case that only partial information about the states of the nodes is available in the network outputs. In order to understand biological networks better, it is indispensable to estimate the states of the nodes through available measurements. Hence, the problem of state estimation for biological networks has been one of the investigated dynamical behaviors in recent years \cite{CNSNS-2011-4060,JFI-2013-966,N-2015-168,IEEE-T-NN-2005-279--284,AAA-2014-257971,OCAM-2012-590}.
%
%
However, to the best of the authors' knowledge, there is still no any published result on the state estimation problem for GRNs with time-varying delays and reaction-diffusion terms, which  arouses our research interests.

Motivated by above discussion, we aim to investigate the state estimation problem for GRNs with time-varying delays and reaction-diffusion terms. By introducing new integral terms into a novel Lyapunov--Krasovskii functional and employing Wirtinger-based integral inequality, Wirtinger's inequality, Green's  identity, convex combination approach and RCC approach, an asymptotic stability criterion of the error system is established in terms of LMIs. Thereby, a state observer is designed, and the observer gain matrices are described in terms of the solution to a set of LMIs.

The rest of the paper is organized as follows: the problem is formulated and some preliminaries
are given in Section 2; in Section 3, an asymptotic stability criterion for the error system is established, and an approach to design state observer is proposed; two numerical
examples are provided in Section 4; and finally, we conclude this paper in Section 5.

\textbf{Notation}
We now set some standard notations, which will be used in the rest of the paper.
$I$ is the identity matrix
with appropriate dimension, $A^{T}$ represents the transpose of the matrix $A$.
For real symmetric matrices $X$ and $Y$, $X>Y(X\geq Y)$ means that $X-Y$ is positive definite (positive semi-definite).
$\Omega$ is a compact set in the vector space
$\mathbb{R}^{n}$ with smooth boundary $\partial\Omega$. Let $C^{k}(X ,Y)$ be the Banach space of functions which map $X$ into $Y$ and have continuous $k$-order derivatives.
For a positive integer $n$, let $\langle n\rangle$ be the set $\{1,2,\ldots,n\}$.

\section{Model description and preliminaries}\label{2014-3-15-1511-section-1}

This paper considers the following GRNs with time-varying delays and react\-i\-on-diffusion terms \cite{Han.Zhang.Wang-CSSP}:
\begin{equation}\label{equation-1}\setlength{\arraycolsep}{0.5pt}
\left\{\begin{array}{rl}
\displaystyle\frac{\partial\tilde{m}_i(t,x)}{\partial t}=&\sum^{l}_{k=1}\displaystyle\frac{\partial}{\partial x_{k}}\left(D_{ik}\displaystyle\frac{\partial\tilde{m}_{i}(t,x)}{\partial x_{k}}\right)-a_i\tilde{m}_i(t,x)\\&+\sum^n_{j=1}w_{ij}g_j(\tilde{p}_j(t-\sigma(t),x))+q_i, \\
\displaystyle\frac{\partial\tilde{p}_i(t,x)}{\partial t}=&\sum^{l}_{k=1}\displaystyle\frac{\partial}{\partial x_{k}}\left(D^{*}_{ik}\displaystyle\frac{\partial\tilde{p}_{i}(t,x)}{\partial x_{k}}\right)-c_i\tilde{p}_i(t,x)\\&+b_i\tilde{m}_i(t-\tau(t),x),
\end{array}\right.
\end{equation}
where $i\in\langle n \rangle$, 
$x=\mathrm{col}(x_{1},x_{2},\dots,x_{l})\in\Omega\subset \mathbb{R}^{l}$,
     $\Omega=\{x\bigl||x_{k}|\leq L_{k},k\in\langle l\rangle\}$, $L_{k}$ is a given constant; $D_{ik}>0$ and $D^{*}_{ik}>0$ denote the diffusion rate matrices; $\tilde{m}_{i}(t,x)$ and $\tilde{p}_{i}(t,x)$ are the concentrations of mRNA and
protein of the $i$th node, respectively; $a_i$ and $c_i$ are degradation rates of the mRNA and protein, respectively; $b_i$ represents the translation rate; $W:=[w_{ij}]\in \mathbb{R}^{n\times n}$ is the coupling matrix of the genetic networks, which is defined as follows:
$$
w_{ij}=\left\{ \begin{array}{ll}
\gamma_{ij},  &\hbox{if $j$ is an activator of gene $i$, }\\
0           , &\hbox{if there is no link from gene $j$ to $i$},\\
-\gamma_{ij}, &\hbox{if $j$ is a repressor of  gene $i$, }
\end{array}\right.
$$
here $\gamma_{ij}$ is the dimensionless transcriptional rate of transcription factor $j$ to gene $i$; $g_{j}$ represents the feedback regulation function of protein on transcription, which is the monotonic function in Hill form, i.e., $g_{j}(s)=\frac{s^{H}}{1+s^{H}}$, where $H$ is the Hill coefficient; $q_{i}=\Sigma_{j\in I_{i}}\gamma_{ij}$, $I_{i}$ is the set of all the nodes which are repressors of gene $i$; $\sigma(t)$ and $\tau(t)$ are time-varying delays satisfying
\begin{equation}\label{equation-2}
\begin{array}{ll}
0\leq\tau(t)\leq\overline{\tau}, \ \dot{\tau}(t)\leq\mu_{1}, \\
0\leq\sigma(t)\leq\overline{\sigma}, \ \dot{\sigma}(t)\leq\mu_{2},
\end{array}
\end{equation}
where $\overline{\tau}$, $\overline{\sigma}$, $\mu_{1}$ and $\mu_{2}$ are non-negative real numbers.

%
The initial conditions associated with GRN (\ref{equation-1}) are given as follows:
$$
\begin{array}{lll}
\tilde{m}_{i}(s,x)=\phi_{i}(s,x), &x\in\Omega, &s\in[-d,0], i\in\langle n\rangle,\\
\tilde{p}_{i}(s,x)=\phi^{*}_{i}(s,x), &x\in\Omega, &s\in[-d,0], i\in\langle n\rangle,\\
\end{array}
$$
where $d=\mathrm{\mathrm{max}}\{\overline{\sigma}, \overline{\tau}\}$, and $\phi_{i}(s,x)$, $\phi^{*}_{i}(s,x)\in C^{1}([-d,0]\times\Omega , \mathbb{R})$.

In this paper, the following type of boundary conditions (Dirichlet boundary conditions) is considered:
$$
\begin{array}{lll}
\tilde{m}_{i}(t,x)=0, &x\in\partial\Omega, &t\in[-d,+\infty),\\
\tilde{p}_{i}(t,x)=0, &x\in\partial\Omega, &t\in[-d,+\infty).
\end{array}
$$

Now, we assume that
$$m^{*}(x):=\mathrm{col}(m^{*}_{1}(x),m^{*}_{2}(x),\dots,m^{*}_{n}(x))$$ and $$p^{*}(x):=\mathrm{col}(p^{*}_{1}(x),p^{*}_{2}(x),\dots,p_{n}^{*}(x))$$
are the unique equilibrium solution of GRN (\ref{equation-1}), that is,
$$
\left\{\begin{array}{rl}
0=&\sum^{l}_{k=1}\displaystyle\frac{\partial}{\partial x_{k}}\left(D_{k}\displaystyle\frac{\partial m^{*}_{i}(x)}{\partial x_{k}}\right)-a_i m^{*}_{i}(x)\\
&+\sum^n_{j=1}w_{ij}g_j(p^{*}_j(x))+q_i,\\
0=&\sum^{l}_{k=1}\displaystyle\frac{\partial}{\partial x_{k}}\left(D^{*}_{k}\displaystyle\frac{\partial p^{*}_{i}(x)}{\partial x_{k}}\right)-c_i p^{*}_{i}(x)+b_i m^{*}_i(x)
\end{array}\right.
$$
for $i\in\langle n\rangle$. Obviously, the transformations, $\bar{m}_{i}=\tilde{m}_{i}-m_{i}^{*}$ and $\bar{p}_{i}=\tilde{p}_{i}-p_{i}^{*}$, $i\in\langle n\rangle,$ transform GRN (\ref{equation-1}) into the following matrix form:
\begin{equation}\label{equation-4}
\left\{\begin{array}{rl}
\displaystyle\frac{\partial \bar{m}(t,x)}{\partial t}=&\sum^{l}_{k=1}\displaystyle\frac{\partial}{\partial x_{k}}\left(D_{k}\displaystyle\frac{\partial \bar{m}(t,x)}{\partial x_{k}}\right)\\
&-A\bar{m}(t,x)+Wf(\bar{p}(t-\sigma(t),x)),\\
\displaystyle\frac{\partial \bar{p}(t,x)}{\partial t}=&\sum^{l}_{k=1}\displaystyle\frac{\partial}{\partial x_{k}}\left(D^{*}_{k}\displaystyle\frac{\partial \bar{p}(t,x)}{\partial x_{k}}\right)\\
&-C\bar{p}(t,x)+B\bar{m}(t-\tau(t),x),
\end{array}\right.
\end{equation}where
$$A=\mathrm{diag}(a_{1},a_{2},\ldots,a_{n}), C=\mathrm{diag}(c_{1},c_{2},\ldots,c_{n}), $$
$$B=\mathrm{diag}(b_{1},b_{2},\ldots,b_{n}),$$
$$D_{k}=\mathrm{diag}(D_{1k},D_{2k},\ldots,D_{nk}),$$
$$ D^{*}_{k}=\mathrm{diag}(D^{*}_{1k},D^{*}_{2k},\ldots,D^{*}_{nk}),$$
$$\bar{m}(t,x)=\mathrm{col}(\bar{m}_{1}(t,x),\bar{m}_{2}(t,x),\dots,\bar{m}_{n}(t,x)),$$
$$\bar{p}(t,x)=\mathrm{col}(\bar{p}_{1}(t,x),\bar{p}_{2}(t,x),\dots,\bar{p}_{n}(t,x)), $$
$$
\begin{array}{rl}
&f(\bar{p}(t-\sigma(t),x))\\
=&\mathrm{col}(f_{1}(\bar{p}_{1}(t-\sigma(t),x)),\cdots,f_{n}(\bar{p}_{n}(t-\sigma(t),x))),
\end{array}
$$
$$
f_{i}(\bar{p}_{i}(t-\sigma(t),x))=g_{i}(\bar{p}_i(t-\sigma(t),x)+p^{*}_{i})-g_{i}(p^{*}_{i}), \ i\in\langle n\rangle.
$$

Because of the complexity of GRN (\ref{equation-4}), it is normally of the case that
only partial information about the states of the nodes is available in the network outputs. In order to obtain the true state of (\ref{equation-4}), it becomes necessary to estimate the states of the nodes through network measurements. The available measurements are given as follows:
\begin{equation}\label{equation-6}
\left\{\begin{array}{l}
z_{m}(t,x)=M\bar{m}(t,x),\\
z_{p}(t,x)=N\bar{p}(t,x),
\end{array}\right.
\end{equation}
where $z_{m}(t,x)$ and $z_{p}(t,x)$ are the actual measurement outputs, and $M$ and $N$ are known constant matrices with appropriate
dimensions.

To estimate the states of GRN (\ref{equation-4}) through available measurement
outputs in (\ref{equation-6}), we construct the following state observer:
\begin{equation}\label{equation-7}
\left\{\begin{array}{rl}
\hspace*{-3mm}\displaystyle\frac{\partial \hat{m}(t,x)}{\partial t}=
&\hspace*{-3mm}\sum^{l}_{k=1}\displaystyle\frac{\partial}{\partial x_{k}}\left(D_{k}\displaystyle\frac{\partial \hat{m}(t,x)}{\partial x_{k}}\right)-A\hat{m}(t,x)\\
&\hspace*{-3mm}+Wf(\hat{p}(t-\sigma(t),x))\\
&\hspace*{-3mm}+K_{1}[z_{m}(t,x)-M\hat{m}(t,x)],\\
\hspace*{-3mm}\displaystyle\frac{\partial \hat{p}(t,x)}{\partial t}=
&\hspace*{-3mm}\sum^{l}_{k=1}\displaystyle\frac{\partial}{\partial x_{k}}\left(D^{*}_{k}\displaystyle\frac{\partial \hat{p}(t,x)}{\partial x_{k}}\right)-C\hat{p}(t,x)\\
&\hspace*{-3mm}+B\hat{m}(t-\tau(t),x)+K_{2}[z_{p}(t,x)-N\hat{p}(t,x)],
\end{array}\right.
\end{equation}
where $\hat{m}(t,x)$ and $\hat{p}(t,x)$ are the estimations of $m(t,x)$ and $p(t,x)$, respectively, and $K_{1}$ and $K_{2}$ are the observer gain matrices to be designed later.

The initial conditions for the state observer (\ref{equation-7}) are assumed to be $(\hat{m}_{i}(t,x),\hat{p}_{i}(t,x))=(\phi_{i}(s,x),\phi^{*}_{i}(s,x))$.

Our aim is to find suitable observer gains $K_{1}$ and $K_{2}$, so that $\hat{m}(t,x)$ and $\hat{p}(t,x)$, respectively, approach to $m(t,x)$ and $p(t,x)$ as $t\rightarrow+\infty$.
Let the error state vectors be $m(t,x)=\bar{m}(t,x)-\hat{m}(t,x)$ and $p(t,x)=\bar{p}(t,x)-\hat{p}(t,x)$. Then it follows from (\ref{equation-4}), (\ref{equation-6}) and (\ref{equation-7}) that
\begin{equation}\label{equation-8}
\left\{\begin{array}{rl}
\hspace*{-3mm}\displaystyle\frac{\partial m(t,x)}{\partial t}=
&\hspace*{-3mm}\sum^{l}_{k=1}\displaystyle\frac{\partial}{\partial x_{k}}\left(D_{k}\displaystyle\frac{\partial m(t,x)}{\partial x_{k}}\right)\\
&\hspace*{-3mm}-(A+K_{1}M)m(t,x)+W\bar{f}(p(t-\sigma(t),x)),\\
\hspace*{-3mm}\displaystyle\frac{\partial p(t,x)}{\partial t}=&\hspace*{-3mm}\sum^{l}_{k=1}\displaystyle\frac{\partial}{\partial x_{k}}\left(D^{*}_{k}\displaystyle\frac{\partial p(t,x)}{\partial x_{k}}\right)\\
&\hspace*{-3mm}-(C+K_{2}N)p(t,x)+Bm(t-\tau(t),x),
\end{array}\right.
\end{equation}
where
$$
\label{equation-8-1}\bar{f}(p(t-\sigma(t),x))=f(\bar{p}(t-\sigma(t),x))-f(\hat{p}(t-\sigma(t),x)).
$$

From the relationship among $\bar{f}_i$, $f_{i}$ and $g_{i}$, one can easily obtain that
$$\bar{f}_{i}(0)=0, 0\leq\frac{\bar{f}_{i}(y)}{y}\leq\xi_{i}, \forall y\in \mathbb{R}, y \neq0, i\in\langle n\rangle, $$ namely,
 \begin{equation}\label{equation-5}
\bar{f}(0)=0, \bar{f}^{T}(z)(\bar{f}(z)-Kz)\leq0,\forall z\in \mathbb{R}^{n},
\end{equation}
where $K=\mathrm{diag}(\xi_{1},\xi_{2},\ldots,\xi_{n})>0$.

In this paper, we assume that error system (\ref{equation-8}) satisfies Dirichlet boundary conditions:
$$
\begin{array}{lll}
m_{i}(t,x)=0, &x\in\partial\Omega, &t\in[-d,+\infty),\\
p_{i}(t,x)=0, &x\in\partial\Omega, &t\in[-d,+\infty).
\end{array}
$$

We introduce the following lemmas which play key roles in obtaining the main results of this paper.
\begin{lemma}[Jensen's Inequality]\cite{Gu(2000),A-2010-466}\label{lemma3}
For any constant matrix $M^{T}=M>0$ of appropriate dimension, any scalars $a$ and $b$ with $a<b$, and a vector function $w:[a,b]\rightarrow \mathbb{R}^{n}$ such that the integrals concerned are
well defined, then the following inequality holds:
$$\left(\int^{b}_{a}\hspace*{-2mm}w(s)\mathrm{d}s\right)^{T}M\left(\int^{b}_{a}\hspace*{-2mm}w(s)\mathrm{d}s\right)\leq(b-a)\int^{b}_{a}\hspace*{-2mm}w^{T}(s)Mw(s)\mathrm{d}s,$$
$$\renewcommand{\arraystretch}{2}\begin{array}{rl}
&\left(\int^{b}_{a}\int^{b}_{\theta}w(s)\mathrm{d}s\mathrm{d}\theta \right)^{T}M\left(\int^{b}_{a}\int^{b}_{\theta}w(s)\mathrm{d}s\mathrm{d}\theta\right)\\ \leq&\frac{(b-a)^{2}}{2}\int^{b}_{a}\int^{b}_{\theta}w^{T}(s)Mw(s)\mathrm{d}s\mathrm{d}\theta.$$
\end{array}$$
\end{lemma}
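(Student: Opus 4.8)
The plan is to prove both inequalities as consequences of the basic quadratic/Cauchy--Schwarz-type estimate associated with a positive definite weight matrix. For the first (single-integral) inequality, I would start from the observation that for any fixed vectors $u,v\in\mathbb{R}^{n}$ and any $M=M^{T}>0$ one has $(u-v)^{T}M(u-v)\ge 0$, which rearranges to $2u^{T}Mv\le u^{T}Mu+v^{T}Mv$. The cleanest route, however, is to invoke the generalized Cauchy--Schwarz inequality in the inner product $\langle u,v\rangle_{M}:=u^{T}Mv$: since $M>0$, this is a genuine inner product on $\mathbb{R}^{n}$, and for the $L^{2}$-type pairing of the constant function $\mathbf{1}$ with $w(\cdot)$ on $[a,b]$ we get
\begin{equation*}
\left(\int_{a}^{b} 1\cdot w(s)\,\mathrm{d}s\right)^{T} M \left(\int_{a}^{b} 1\cdot w(s)\,\mathrm{d}s\right) \le \left(\int_{a}^{b} 1^{2}\,\mathrm{d}s\right)\left(\int_{a}^{b} w^{T}(s)Mw(s)\,\mathrm{d}s\right),
\end{equation*}
and the first factor on the right is exactly $b-a$. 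This yields the first claim. Alternatively, and perhaps more in keeping with the elementary flavor of such lemmas, one can write $\int_a^b\int_a^b (w(s)-w(r))^{T}M(w(s)-w(r))\,\mathrm{d}r\,\mathrm{d}s\ge 0$, expand the integrand, and rearrange; the cross term produces $-2\bigl(\int_a^b w(s)\,\mathrm{d}s\bigr)^{T}M\bigl(\int_a^b w(r)\,\mathrm{d}r\bigr)$ while the two square terms each give $(b-a)\int_a^b w^{T}(s)Mw(s)\,\mathrm{d}s$.

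For the second (double-integral) inequality I would treat the iterated integral $\int_{a}^{b}\int_{\theta}^{b} w(s)\,\mathrm{d}s\,\mathrm{d}\theta$ as a single integral of $w$ against a weight over the triangular region $T=\{(\theta,s): a\le\theta\le b,\ \theta\le s\le b\}$. Applying the same $M$-weighted Cauchy--Schwarz estimate over $T$ with the constant function $1$ gives
\begin{equation*}
\left(\iint_{T} w(s)\,\mathrm{d}s\,\mathrm{d}\theta\right)^{T} M \left(\iint_{T} w(s)\,\mathrm{d}s\,\mathrm{d}\theta\right) \le \bigl(\mathrm{area}(T)\bigr)\iint_{T} w^{T}(s)Mw(s)\,\mathrm{d}s\,\mathrm{d}\theta,
\end{equation*}
and since $\mathrm{area}(T)=\tfrac{(b-a)^{2}}{2}$ this is precisely the stated bound. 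Rewriting $\iint_T$ back in the iterated form $\int_a^b\int_\theta^b$ finishes it. Note the statement as typeset uses $\int_a^b\int_\theta^b$, so I would keep that order throughout and only use the region-$T$ picture to compute the constant.

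I expect there to be essentially \emph{no} serious obstacle here: this is a standard lemma and the only thing to be careful about is the bookkeeping of the domain of integration in the double-integral case (making sure the area of the triangle, not the square, is used, which is what produces the factor $\tfrac{1}{2}$ rather than $1$) and the measurability/well-definedness hypotheses on $w$, which the statement already assumes. A minor point worth stating explicitly in the write-up is that the generalized Cauchy--Schwarz step is legitimate because $M>0$ makes $(u,v)\mapsto u^{T}Mv$ a bona fide inner product; if one prefers to avoid invoking that, the ``expand the square of the difference'' computation sketched above is fully self-contained and requires only that $M\ge 0$.
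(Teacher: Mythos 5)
The paper does not prove this lemma at all: it is quoted as a known result and attributed to the cited references (Gu's 2000 CDC paper and Sun et al., Automatica 2010), so there is no in-paper argument to compare yours against. On its own merits your proposal is correct and is essentially the standard proof. The $M$-weighted Cauchy--Schwarz step is legitimate because $M>0$ makes $\langle u,v\rangle_{M}=u^{T}Mv$ an inner product (equivalently, write $M=L^{T}L$ and apply ordinary Cauchy--Schwarz in $L^{2}$ to $Lw$), and your handling of the double-integral case is the right one: viewing $\int_{a}^{b}\int_{\theta}^{b}w(s)\,\mathrm{d}s\,\mathrm{d}\theta$ as an integral over the triangle $T=\{(\theta,s):a\le\theta\le s\le b\}$ and using $\mathrm{area}(T)=\tfrac{(b-a)^{2}}{2}$ is exactly what produces the factor $\tfrac{1}{2}$, which is the only place a careless argument could go wrong. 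Your elementary fallback, expanding $\int_{a}^{b}\int_{a}^{b}(w(s)-w(r))^{T}M(w(s)-w(r))\,\mathrm{d}r\,\mathrm{d}s\ge 0$, is also sound and, as you note, only needs $M\ge 0$. For completeness, the argument in the cited sources is usually phrased via a Schur-complement manipulation (integrating the positive semidefinite block matrix $\bigl[\begin{smallmatrix}w^{T}Mw & w^{T}\\ w & M^{-1}\end{smallmatrix}\bigr]$ and taking the Schur complement of the integrated block); that route is equivalent to your Cauchy--Schwarz step, so the difference is cosmetic rather than substantive.
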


\begin{lemma}[Wirtinger-based Integral Inequality]\label{2015-lemma-1}\cite{A-2013-2860}
For given a symmetric positive definite matrix $Q\in \mathbb{R}^{n\times n}$, and a differentiable function $\omega : [a,b] \rightarrow \mathbb{R}^{n}$, the following inequality holds:
$$
\int_a^b\dot{w}^\mathrm{T}(u)Q\dot{w}(u){\mathrm{d}}u\nonumber \ge   \frac{1}{b-a}
\left[\begin{array}{rl}
\Omega_0\\
\Omega_1
\end{array}\right]^\mathrm{T}\tilde{Q}
\left[\begin{array}{rl}
\Omega_0\\
\Omega_1
\end{array}\right],
$$
where $\tilde{Q} = \mathrm{diag}(Q,3Q)$, $\Omega_0 = w(b)-w(a)$ and
\begin{eqnarray*}
\Omega_1 = w(b)+w(a)-\frac{2}{b-a}\int_a^b w(u){\mathrm{d}}u. \nonumber
\end{eqnarray*}
\end{lemma}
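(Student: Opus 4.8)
The plan is to prove the inequality by an orthogonal-decomposition argument built on the first two shifted Legendre polynomials on $[a,b]$. Write $\delta=b-a$ and set $L_0(u)\equiv 1$ and $L_1(u)=(2u-a-b)/\delta$; a direct computation (the substitution $s=(2u-a-b)/\delta$ for the last one) gives the orthogonality relations $\int_a^b L_0(u)L_1(u)\,\mathrm{d}u=0$, $\int_a^b L_0^2(u)\,\mathrm{d}u=\delta$ and $\int_a^b L_1^2(u)\,\mathrm{d}u=\delta/3$. First I would record how the data on the right-hand side are captured by this basis: clearly $\int_a^b \dot{w}(u)L_0(u)\,\mathrm{d}u=w(b)-w(a)=\Omega_0$, and by integration by parts $\int_a^b \dot{w}(u)(2u-a-b)\,\mathrm{d}u=\bigl[(2u-a-b)w(u)\bigr]_a^b-2\int_a^b w(u)\,\mathrm{d}u=\delta\,\Omega_1$, so that $\int_a^b \dot{w}(u)L_1(u)\,\mathrm{d}u=\Omega_1$. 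Hence the (componentwise) projection coefficients of $\dot w$ onto $L_0$ and $L_1$ are $c_0:=\delta^{-1}\Omega_0$ and $c_1:=(\delta/3)^{-1}\Omega_1=3\delta^{-1}\Omega_1$.

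Next I would set $r(u):=\dot{w}(u)-c_0L_0(u)-c_1L_1(u)$, which by the choice of $c_0,c_1$ satisfies the moment conditions $\int_a^b L_0(u)r(u)\,\mathrm{d}u=0$ and $\int_a^b L_1(u)r(u)\,\mathrm{d}u=0$. Substituting $\dot w=c_0L_0+c_1L_1+r$ into $\int_a^b \dot{w}^{\mathrm{T}}(u)Q\dot{w}(u)\,\mathrm{d}u$ and expanding the quadratic form, every cross term drops out: the $L_0$--$L_1$ cross term because $\int_a^b L_0L_1=0$, and the cross terms pairing $r$ with $c_0L_0$ or $c_1L_1$ because $Q$ and the $c_j$ are constant in $u$, so, e.g., $\int_a^b (c_0L_0(u))^{\mathrm{T}}Q\,r(u)\,\mathrm{d}u=c_0^{\mathrm{T}}Q\int_a^b L_0(u)r(u)\,\mathrm{d}u=0$. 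What remains is
$$\int_a^b \dot{w}^{\mathrm{T}}(u)Q\dot{w}(u)\,\mathrm{d}u=\delta\, c_0^{\mathrm{T}}Qc_0+\frac{\delta}{3}\, c_1^{\mathrm{T}}Qc_1+\int_a^b r^{\mathrm{T}}(u)Qr(u)\,\mathrm{d}u.$$
Since $Q>0$ the last integral is nonnegative, so the left-hand side is at least $\delta\, c_0^{\mathrm{T}}Qc_0+(\delta/3)\, c_1^{\mathrm{T}}Qc_1$; inserting $c_0=\delta^{-1}\Omega_0$ and $c_1=3\delta^{-1}\Omega_1$ turns this into $\delta^{-1}\bigl(\Omega_0^{\mathrm{T}}Q\Omega_0+3\,\Omega_1^{\mathrm{T}}Q\Omega_1\bigr)$, which is precisely $(b-a)^{-1}\mathrm{col}(\Omega_0,\Omega_1)^{\mathrm{T}}\tilde{Q}\,\mathrm{col}(\Omega_0,\Omega_1)$ with $\tilde{Q}=\mathrm{diag}(Q,3Q)$, as claimed.

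The only point that needs care is the vanishing of the cross terms in which the constant matrix $Q$ is sandwiched between two vector factors: one must justify pulling the scalar weights $L_j(u)$ and the constant vectors $c_j$ outside the integral so that what is left is $\int_a^b L_j(u)r(u)\,\mathrm{d}u=0$. This is legitimate exactly because $Q$, $c_0$ and $c_1$ do not depend on $u$, and it is the step where a careless argument could slip. An equivalent route that avoids Legendre polynomials altogether is to \emph{define} $r$ by the displayed formula from the start, verify the two moment conditions directly, and then read off the identity above; the inequality again follows from $\int_a^b r^{\mathrm{T}}(u)Qr(u)\,\mathrm{d}u\ge 0$, at the cost of a somewhat more computational verification.
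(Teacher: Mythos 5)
Your proof is correct: the orthogonality relations, the moment identities $\int_a^b\dot w\,L_0=\Omega_0$ and $\int_a^b\dot w\,L_1=\Omega_1$ (the integration by parts checks out), and the expansion with vanishing cross terms all hold, and the final substitution of $c_0=\delta^{-1}\Omega_0$, $c_1=3\delta^{-1}\Omega_1$ reproduces exactly $(b-a)^{-1}(\Omega_0^{\mathrm T}Q\Omega_0+3\,\Omega_1^{\mathrm T}Q\Omega_1)$. Note, however, that the paper itself gives no proof of this lemma --- it is quoted verbatim with a citation to Seuret and Gouaisbaut (2013) --- so there is no in-paper argument to compare against; your projection onto the first two shifted Legendre polynomials is essentially the standard proof from that cited reference, where the same decomposition appears as a completion-of-squares with a constant plus a linear correction term, and your version makes the orthogonality structure (and why the factor $3$ arises from $\int_a^b L_1^2=\delta/3$) more transparent.
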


\begin{lemma}[Wirtinger's Inequality]\cite{kammler2007first}\label{lemma2}
Assume that the function $f\in C^{1}([a,b],\mathbb{R}^{n})$
satisfies $f(a)=f(b)=0$. Then
$$\int^{b}_{a}f^{2}(v)\mathrm{d}v\leq\frac{(b-a)^{2}}{\pi^{2}}\int^{b}_{a}[f'(v)]^{2}\mathrm{d}v.$$
\end{lemma}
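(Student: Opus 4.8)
The plan is to derive Wirtinger's inequality from a short integration-by-parts identity after two harmless reductions. First, since $\int_a^b f^2(v)\,\mathrm{d}v=\sum_{i=1}^{n}\int_a^b f_i^2(v)\,\mathrm{d}v$ and likewise $\int_a^b [f'(v)]^2\,\mathrm{d}v=\sum_{i=1}^{n}\int_a^b [f_i'(v)]^2\,\mathrm{d}v$, and each component $f_i$ inherits the boundary conditions $f_i(a)=f_i(b)=0$, it is enough to treat the scalar case $n=1$. Second, the affine substitution $v=a+\frac{b-a}{\pi}u$ carries $[a,b]$ onto $[0,\pi]$ and replaces $f$ by $h(u):=f\!\left(a+\frac{b-a}{\pi}u\right)$ with $h(0)=h(\pi)=0$; tracking the Jacobian shows $\int_0^\pi h^2\,\mathrm{d}u=\frac{\pi}{b-a}\int_a^b f^2\,\mathrm{d}v$ and $\int_0^\pi [h']^2\,\mathrm{d}u=\frac{b-a}{\pi}\int_a^b [f']^2\,\mathrm{d}v$, so the claim is equivalent to the normalized inequality $\int_0^\pi h^2(u)\,\mathrm{d}u\le\int_0^\pi [h'(u)]^2\,\mathrm{d}u$ on $[0,\pi]$.

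For the normalized inequality I would substitute $h(u)=g(u)\sin u$. Because $h\in C^1$ vanishes at $0$ and $\pi$, L'Hôpital's rule gives $g(0)=h'(0)$ and $g(\pi)=-h'(\pi)$, so $g$ is continuous on $[0,\pi]$ and $C^1$ on the open interval. Then $h'=g'\sin u+g\cos u$, whence
$$[h'(u)]^2-h^2(u)=[g'(u)]^2\sin^2 u+2g(u)g'(u)\sin u\cos u+g^2(u)\cos 2u.$$
Writing the middle term as $\tfrac12(g^2)'\sin 2u$ and integrating by parts,
$$\int_0^\pi 2gg'\sin u\cos u\,\mathrm{d}u=\tfrac12\big[g^2\sin 2u\big]_0^\pi-\int_0^\pi g^2\cos 2u\,\mathrm{d}u=-\int_0^\pi g^2\cos 2u\,\mathrm{d}u,$$
which exactly cancels the term $\int_0^\pi g^2\cos 2u\,\mathrm{d}u$, leaving $\int_0^\pi\big([h']^2-h^2\big)\,\mathrm{d}u=\int_0^\pi [g'(u)]^2\sin^2 u\,\mathrm{d}u\ge 0$. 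This is the normalized inequality, and undoing the scaling produces the constant $(b-a)^2/\pi^2$.

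The one delicate point is the regularity of $g=h/\sin u$ near the endpoints: with only $h\in C^1$ I cannot assert $g\in C^1[0,\pi]$, so the boundary term $\big[g^2\sin 2u\big]_0^\pi$ and the integrability of $[g']^2\sin^2 u$ must be justified by carrying out the computation on $[\varepsilon,\pi-\varepsilon]$ and letting $\varepsilon\to 0^+$, using $h(u)=O(u)$ and $h'(u)=O(1)$ near $0$ (symmetrically near $\pi$) so the extra contributions vanish in the limit. I expect this to be the only real obstacle; everything else is bookkeeping. If one prefers to sidestep it, the alternative is the Fourier route: extend $h$ oddly and $2\pi$-periodically to a continuous, piecewise-$C^1$ function, write $h(u)=\sum_{k\ge1}b_k\sin ku$, observe that integration by parts yields $h'(u)=\sum_{k\ge1}kb_k\cos ku$ in $L^2$, and invoke Parseval to get $\int_0^\pi h^2=\tfrac{\pi}{2}\sum_{k\ge1}b_k^2\le\tfrac{\pi}{2}\sum_{k\ge1}k^2 b_k^2=\int_0^\pi [h']^2$, with equality precisely when only the $k=1$ mode is present.
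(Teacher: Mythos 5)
Your proof is correct, but note that the paper never proves Lemma~\ref{lemma2} at all: it is quoted directly from the cited Fourier-analysis textbook \cite{kammler2007first}, so there is no in-paper argument to compare against, and the proof in that reference (the standard one) is essentially the Fourier route you sketch as an alternative — odd $2\pi$-periodic extension, sine expansion $h=\sum_{k\ge1}b_k\sin ku$, integration by parts to get the coefficients $kb_k$ of $h'$, and Parseval, which also identifies the equality case $k=1$. Your primary argument is genuinely different and more elementary: componentwise reduction, affine rescaling to $[0,\pi]$ (your Jacobian bookkeeping and the resulting constant $(b-a)^2/\pi^2$ are right), the substitution $h=g\sin u$, and a single integration by parts. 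The one delicate point is exactly the one you flag — with only $h\in C^1$, the function $g=h/\sin u$ need not be $C^1$ up to the endpoints — and your fix works: $g$ extends continuously to $[0,\pi]$ by L'H\^{o}pital, so on $[\varepsilon,\pi-\varepsilon]$ the boundary term $\tfrac12 g^2\sin 2u$ tends to $0$, the left-hand side $\int_\varepsilon^{\pi-\varepsilon}([h']^2-h^2)\,\mathrm{d}u$ converges because the integrand is continuous on $[0,\pi]$, and the remaining term $\int_\varepsilon^{\pi-\varepsilon}[g']^2\sin^2u\,\mathrm{d}u$ is nonnegative, which already yields $\int_0^\pi h^2\le\int_0^\pi [h']^2$ without even needing square-integrability of $g'\sin u$. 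In terms of trade-offs, your substitution proof is self-contained and uses only calculus, while the Fourier/Parseval route is shorter once that machinery is available, gives the extremizers, and is the argument the paper's citation actually points to.
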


\begin{lemma}\label{lemma5}\cite{Han.Zhang.Wang-CSSP}
Let $N_{1}>0$ and $N_{2}>0$ be a pair of diagonal matrices. Then the states of (\ref{equation-8}) satisfy
$$\begin{array}{rl}
&\int_{\Omega}\frac{\partial m^{T}(s,x)}{\partial t}N_{1}\sum^{l}_{k=1}\frac{\partial}{\partial x_{k}}\left(D_{k}\frac{\partial m(t,x)}{\partial x_{k}}\right)\mathrm{d}x
\\=&\int_{\Omega}m^{T}(t,x)N_{1}\sum^{l}_{k=1}\frac{\partial}{\partial x_{k}}\left[D_{k}\frac{\partial}{\partial x_{k}}\left(\frac{\partial m(t,x)}{\partial t}\right)\right]\mathrm{d}x,
\end{array}
$$
$$\begin{array}{rl}
&\int_{\Omega}\frac{\partial p^{T}(s,x)}{\partial t}N_{2}\sum^{l}_{k=1}\frac{\partial}{\partial x_{k}}\left(D^{*}_{k}\frac{\partial p(t,x)}{\partial x_{k}}\right)\mathrm{d}x
\\=&\int_{\Omega}p^{T}(t,x)N_{2}\sum^{l}_{k=1}\frac{\partial}{\partial x_{k}}\left[D^{*}_{k}\frac{\partial}{\partial x_{k}}\left(\frac{\partial p(t,x)}{\partial t}\right)\right]\mathrm{d}x.
\end{array}
$$
\end{lemma}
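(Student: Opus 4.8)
The plan is to obtain the identity by applying Green's first identity (equivalently, integration by parts in each spatial coordinate) twice, exploiting the diagonal structure of $N_1$ and $D_k$ together with the Dirichlet boundary conditions imposed on the error system (\ref{equation-8}). First I would reduce the matrix identity to a scalar one. Since $N_1=\mathrm{diag}((N_1)_{11},\ldots,(N_1)_{nn})$ and $D_k=\mathrm{diag}(D_{1k},\ldots,D_{nk})$ commute with the componentwise differential operators, the left-hand side equals $\sum_{k=1}^{l}\sum_{i=1}^{n}(N_1)_{ii}D_{ik}\int_\Omega \frac{\partial m_i(t,x)}{\partial t}\,\frac{\partial^2 m_i(t,x)}{\partial x_k^2}\,\mathrm{d}x$, while the right-hand side equals $\sum_{k=1}^{l}\sum_{i=1}^{n}(N_1)_{ii}D_{ik}\int_\Omega m_i(t,x)\,\frac{\partial^2}{\partial x_k^2}\!\left(\frac{\partial m_i(t,x)}{\partial t}\right)\mathrm{d}x$. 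Thus it suffices to prove, for each fixed $i\in\langle n\rangle$ and $k\in\langle l\rangle$, the scalar identity $\int_\Omega \frac{\partial m_i}{\partial t}\,\frac{\partial^2 m_i}{\partial x_k^2}\,\mathrm{d}x = \int_\Omega m_i\,\frac{\partial^2}{\partial x_k^2}\!\left(\frac{\partial m_i}{\partial t}\right)\mathrm{d}x$.

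To establish the scalar identity I would integrate by parts twice in $x_k$, each time using the divergence theorem on $\Omega\subset\mathbb{R}^l$. The first step gives $\int_\Omega \frac{\partial m_i}{\partial t}\,\frac{\partial^2 m_i}{\partial x_k^2}\,\mathrm{d}x = \int_{\partial\Omega}\frac{\partial m_i}{\partial t}\,\frac{\partial m_i}{\partial x_k}\,\nu_k\,\mathrm{d}S - \int_\Omega \frac{\partial}{\partial x_k}\!\left(\frac{\partial m_i}{\partial t}\right)\frac{\partial m_i}{\partial x_k}\,\mathrm{d}x$, where $\nu_k$ denotes the $k$-th component of the outward unit normal on $\partial\Omega$. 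Applying integration by parts once more to the remaining volume integral gives $-\int_\Omega \frac{\partial}{\partial x_k}\!\left(\frac{\partial m_i}{\partial t}\right)\frac{\partial m_i}{\partial x_k}\,\mathrm{d}x = -\int_{\partial\Omega}\frac{\partial}{\partial x_k}\!\left(\frac{\partial m_i}{\partial t}\right) m_i\,\nu_k\,\mathrm{d}S + \int_\Omega m_i\,\frac{\partial^2}{\partial x_k^2}\!\left(\frac{\partial m_i}{\partial t}\right)\mathrm{d}x$. Substituting the second expression into the first yields the desired scalar identity, provided both surface integrals vanish.

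The only point that is not routine bookkeeping is showing that these two boundary terms are zero. The second one vanishes immediately because $m_i(t,x)=0$ for $x\in\partial\Omega$. For the first one I would observe that the Dirichlet condition $m_i(t,x)=0$ holds for \emph{every} $t$ at each fixed $x\in\partial\Omega$; differentiating this identity with respect to $t$ shows $\frac{\partial m_i(t,x)}{\partial t}=0$ on $\partial\Omega$, so that boundary integral vanishes as well. Summing the scalar identities against the weights $(N_1)_{ii}D_{ik}$ and re-collecting the double sums into matrix form (again using that $N_1$ and $D_k$ are diagonal) gives the first assertion of the lemma; the second assertion follows by the same argument with $m,\,N_1,\,D_k$ replaced by $p,\,N_2,\,D_k^{*}$, using the Dirichlet condition $p_i(t,x)=0$ on $\partial\Omega$. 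The main (and fairly mild) obstacle is simply to justify differentiating the boundary condition in $t$ and interchanging the $t$-derivative with the spatial integral, both of which are legitimate under the smoothness assumptions on the solutions of (\ref{equation-8}).
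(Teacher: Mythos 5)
Your proof is correct: reducing to scalar components via the diagonality of $N_1$ and $D_k$, integrating by parts twice in each $x_k$, and killing both boundary terms by the Dirichlet condition (noting that $m_i(t,x)=0$ on $\partial\Omega$ for all $t$ also forces $\frac{\partial m_i(t,x)}{\partial t}=0$ there) is exactly the Green's-second-identity argument this lemma rests on. The paper itself offers no in-text proof --- it cites the lemma from an earlier work --- but your route coincides with the standard one and with the way the paper handles the analogous boundary manipulations elsewhere (e.g., in its use of Green's formula with Dirichlet conditions), so there is nothing to add beyond your own closing caveat about smoothness, which the assumptions on the solutions of (\ref{equation-8}) cover.
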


\begin{lemma}[RCC Lemma]\label{lemma6}
\cite{A-2011-235} Let $f_1,f_2,\dots,f_N:\mathbb{D}\rightarrow \mathbb{R}$ have positive finite values,
where $\mathbb{D}$ is open subset of
$\mathbb{R}^m$. Then the RCC of $f_i$
over $\mathbb{D}$ satisfies
$$
\begin{array}{rl}
&\min_{\{\alpha_i:
\alpha_i>0,\sum_i\alpha_i=1\}}\sum_i\frac{1}{\alpha_i}f_i(t)\nonumber\\
=&\sum_if_i(t)+\max_{g_{i,j}(t)}\sum_{i\not= j}g_{i,j}(t)
\end{array}
$$
subject to
$$
\begin{array}{rl}
g_{ij}:\mathbb{R}^m\rightarrow \mathbb{R},\ g_{j,i}(t)=g_{i,j}(t),\ \left[\begin{array}{cc}
f_i(t)&g_{i,j}(t)\\
g_{i,j}(t)&f_j(t)
\end{array}\right]\ge 0.
\end{array}
$$\end{lemma}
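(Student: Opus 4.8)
The plan is to evaluate both sides of the claimed identity in closed form and verify that they agree pointwise in $t$. Throughout, fix $t\in\mathbb{D}$ and write $\beta_i=\sqrt{f_i(t)}>0$.

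First I would dispose of the left-hand side. Since $\sum_i\alpha_i=1$ on the feasible simplex, the Cauchy--Schwarz inequality gives
$$
\sum_i\frac{f_i(t)}{\alpha_i}=\Bigl(\sum_i\frac{\beta_i^2}{\alpha_i}\Bigr)\Bigl(\sum_i\alpha_i\Bigr)\ge\Bigl(\sum_i\beta_i\Bigr)^2,
$$
with equality exactly when $\alpha_i=\beta_i/\sum_j\beta_j$, which is an interior point of the simplex because every $f_i(t)$ is positive and finite. As $\alpha\mapsto\sum_i\alpha_i^{-1}f_i(t)$ is continuous and strictly convex on the open simplex and tends to $+\infty$ at its boundary, this interior stationary point is the unique global minimizer; hence the left-hand side equals $\bigl(\sum_i\sqrt{f_i(t)}\bigr)^2=\sum_i f_i(t)+2\sum_{i<j}\sqrt{f_i(t)f_j(t)}$.

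Next I would evaluate the right-hand side. For $i\neq j$, since $f_i(t),f_j(t)>0$, a Schur-complement (equivalently, a nonnegative-determinant) argument shows that $\left[\begin{smallmatrix}f_i(t)&g_{i,j}(t)\\ g_{i,j}(t)&f_j(t)\end{smallmatrix}\right]\ge0$ holds iff $g_{i,j}(t)^2\le f_i(t)f_j(t)$, i.e. $|g_{i,j}(t)|\le\sqrt{f_i(t)f_j(t)}$. The objective $\sum_{i\neq j}g_{i,j}(t)$ is linear and, thanks to the symmetry $g_{j,i}=g_{i,j}$, decouples over the unordered pairs $\{i,j\}$ (each contributing $2\,g_{i,j}(t)$), so it is maximized by taking $g_{i,j}(t)=\sqrt{f_i(t)f_j(t)}$ for every pair. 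This yields $\max\sum_{i\neq j}g_{i,j}(t)=2\sum_{i<j}\sqrt{f_i(t)f_j(t)}$, and adding $\sum_i f_i(t)$ reproduces precisely the closed form obtained for the left-hand side, which proves the identity.

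The two genuinely computational ingredients — the Cauchy--Schwarz step and the $2\times2$ determinant condition — are routine. The only points that need a little care are the index bookkeeping in $\sum_{i\neq j}$ (ordered versus unordered pairs) and the observation that the minimizing $\alpha$ lands in the \emph{open} simplex, which is exactly where the positivity hypothesis on the $f_i$ is used; one should also note that the constraint set for the $g_{i,j}$ is nonempty (e.g. $g_{i,j}\equiv0$ is feasible), so the maximum is well posed, and that no compactness subtlety arises since the feasible $g_{i,j}$ are bounded and the objective continuous.
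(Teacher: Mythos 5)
Your proposal is correct: both sides do equal $\bigl(\sum_i\sqrt{f_i(t)}\bigr)^2$, the Cauchy--Schwarz step with equality at $\alpha_i=\sqrt{f_i(t)}/\sum_j\sqrt{f_j(t)}$ handles the minimization, and the $2\times 2$ positive semidefiniteness condition is indeed equivalent to $|g_{i,j}(t)|\le\sqrt{f_i(t)f_j(t)}$ when $f_i(t),f_j(t)>0$, so the pairwise decoupled maximization is legitimate. Note, however, that the paper itself offers no proof of this lemma --- it is quoted verbatim from Park, Ko and Jeong \cite{A-2011-235} --- so the comparison is really with that reference. There the argument runs in the opposite style: rather than computing both sides in closed form, one shows directly that for every feasible choice of the $g_{i,j}$ and every admissible $\alpha$,
$$
\sum_i\frac{1}{\alpha_i}f_i(t)-\sum_if_i(t)-\sum_{i\neq j}g_{i,j}(t)
=\sum_{i<j}
\left[\begin{array}{c}\sqrt{\alpha_j/\alpha_i}\\-\sqrt{\alpha_i/\alpha_j}\end{array}\right]^{T}
\left[\begin{array}{cc}f_i(t)&g_{i,j}(t)\\ g_{i,j}(t)&f_j(t)\end{array}\right]
\left[\begin{array}{c}\sqrt{\alpha_j/\alpha_i}\\-\sqrt{\alpha_i/\alpha_j}\end{array}\right]\ge 0,
$$
and then exhibits choices attaining equality. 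That route buys exactly what the present paper needs: the one-sided bound $\sum_i\alpha_i^{-1}f_i(t)\ge\sum_if_i(t)+\sum_{i\neq j}g_{i,j}(t)$ for an \emph{arbitrary} feasible $g_{i,j}$, which is how the lemma is invoked in the proof of Theorem 1 (with $g_{1,2}$ parameterized through the slack matrices $G_1$, $G_2$ in condition (\ref{equation-10})), and it extends verbatim to the quadratic-form setting used there. Your closed-form evaluation is a clean and more elementary way to get the full equality in the scalar statement, but it relies on being able to maximize over all $g_{i,j}$, a freedom that is not exploited (nor available) in the LMI application; so both arguments are valid, with yours sharper for the statement as written and the reference's more directly tailored to how the lemma is used downstream.
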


\section{Observer design}\label{2014-3-15-1512-section-1}

In this section, we will design a state observer (\ref{equation-7}) for GRN (\ref{equation-4}), that is, find a pair of observer gain matrices $K_{1}$ and $K_{2}$ such that
the trivial solution of system (\ref{equation-8}) is asymptotically stable under Dirichlet boundary conditions.
For this end, we define
$$e_{0}=0_{14n\times n},$$
$$e_{i}=\mathrm{col}(0_{n\times(i-1)n},I_{n},0_{n\times(n-i)n})^{T}, i\in\langle 14\rangle,$$
$$
\varphi(t,s,x)=\mathrm{col}\left(m(s,x),\int^{t}_{t-\bar{\tau}}m(s,x)\mathrm{d}s\right),
$$
$$
\psi(t,s,x)=\mathrm{col}\left(p(s,x),\int^{t}_{t-\bar{\sigma}}p(s,x)\mathrm{d}s\right),
$$
$$
\begin{array}{rl}
\varsigma(t,x)
=&\hspace{-3mm}\mathrm{col}(m(t,x),m(t-\overline{\tau},x),m(t-\tau(t),x),p(t,x)\\
&\hspace{-3mm}p(t-\overline{\sigma},x), p(t-\sigma(t),x), \bar{f}(p(t,x)), \\
&\hspace{-3mm}\bar{f}(p(t-\sigma(t),x)),\frac{\partial m(t,x)}{\partial t}, \frac{\partial p(t,x)}{\partial t},\\
&\hspace{-3mm}\frac{1}{\tau(t)}\int^{t}_{t-\tau(t)}m(s,x)\mathrm{d}s,\frac{1}{\overline{\tau}-\tau(t)}\int^{t-\tau(t)}_{t-\overline{\tau}}m(s,x)\mathrm{d}s,\\
&\hspace{-3mm}\frac{1}{\sigma(t)}\int^{t}_{t-\sigma(t)}p(s,x)\mathrm{d}s,\frac{1}{\overline{\sigma}-\sigma(t)}\int^{t-\sigma(t)}_{t-\overline{\sigma}}p(s,x)\mathrm{d}s).
\end{array}$$

\begin{theorem}\label{theorem1-1}
For given scalars $\overline{\tau}$, $\overline{\sigma}$, $\mu_{1}$ and $\mu_{2}$ satisfying (\ref{equation-2}), the trivial solution of error system (\ref{equation-8}) under Dirichlet boundary conditions is asymptotically stable if there exist matrices $Q_{i}^{T}=Q_{i}>0$ $(i\in\langle 5\rangle)$, $R_{j}^{T}=R_{j}>0$ $(j\in\langle 4\rangle)$, $M_{j}^{T}=M_{j}>0$ $(j\in \langle 2\rangle)$, diagonal matrices $P_{j}>0$, $\Lambda_{j}>0$ $(j\in \langle 2\rangle)$, and matrices $G_{1}$, $G_{2}$, $W_{1}$ and $W_{2}$ of appropriate sizes, such that the following LMIs hold for $\tau\in\{0, \bar{\tau}\}$ and $\sigma\in\{0, \bar{\sigma}\}$:
\begin{equation}\label{equation-10}
\hat{R}_j:=\left [\begin{array}{cc}
\tilde{R}_{j}&G_{j}\\
G^{T}_{j}&\tilde{R}_{j}
\end{array}\right]\geq0, \ \ \ \ \ \ j\in \langle 2\rangle,
\end{equation}
\begin{equation}\label{equation-11}
\Phi(\tau,\sigma)=\Phi_{0}+\Phi_{1}+\Phi_{2}(\tau,\sigma)+\Phi_{3}+\Phi_{4}(\tau,\sigma)+\Phi_{5}(\tau,\sigma)<0,
\end{equation}
where
$$\renewcommand{\arraystretch}{1.5}
\begin{array}{rl}\Phi_{0}=
&-2e_{7}\Lambda_{1}e_{7}^{T}+e_{4}\Lambda_{1}Ke_{7}^{T}+e_{7}K\Lambda_{1}e_{4}^{T}
-2e_{8}\Lambda_{2}e_{8}^{T}\\
&+e_{6}K\Lambda_{2}e_{8}^{T}+e_{8}\Lambda_{2}K e_{6}^{T}-e_{9}(P_{1}A+W_{1}M)e_{1}^{T}\\
&-e_{1}(P_{1}A+W_{1}M)^{T}e_{9}^{T}+e_{9}P_{1}We_{8}^{T}\\
&+e_{8}W^{T}P_{1}e_{9}^{T}-2e_{9}P_{1}e_{9}^{T}-e_{10}(P_{2}C+W_{2}N)e_{4}^{T}\\
&-e_{4}(P_{2}C+W_{2}N)^{T}e_{10}^{T}+e_{10}P_{2}Be_{3}^{T}\\
&+e_{3}B^{T}P_{2}e_{10}^{T}-2e_{10}P_{2}e_{10}^{T},
\end{array}
$$
$$\renewcommand{\arraystretch}{1.5}
\begin{array}{rl}\Phi_{1}=&-0.5\pi^{2}e_{1}P_{1}D_{L}e_{1}^{T}-2e_{1}(P_{1}A+W_{1}M)e_{1}^{T}\\
&+e_{1}P_{1}We_{8}^{T}+e_{8}W^{T}P_{1}e_{1}^{T}\\
&-0.5\pi^2e_{4}P_{2}D^{*}_{L}e_{4}^{T}-2e_{4}(P_{2}C+W_{2}N)e_{4}^{T}\\
&+e_{4}P_{2}Be_{3}^{T}+e_{3}B^{T}P_{2}e_{4}^{T},
\end{array}
$$
$$\renewcommand{\arraystretch}{1.5}
\begin{array}{rl}
\Phi_{2}(\tau,\sigma)=&e_{1}Q_{1}e_{1}^{T}-(1-\mu_{1})e_{3}Q_{1}e_{3}^{T}\\
&+e_{4}Q_{3}e_{4}^{T}-(1-\mu_{2})e_{6}Q_{3}e_{6}^{T}\\
&+\Delta_{1}Q_{2}\Delta_{1}^{T}+\tau(\Delta_{1}Q_{2}\Delta_{2}^{T}+\Delta_{2}Q_{2}\Delta_{1}^{T})\\&
-\Delta_{3}Q_{2}\Delta_{3}^{T}-\tau(\Delta_{3}Q_{2}\Delta_{2}^{T}+\Delta_{2}Q_{2}\Delta_{3}^{T})\\
&+\Delta_{4}Q_{2}\Delta_{6}^{T}+\Delta_{6}Q_{2}\Delta_{4}^{T}\\
&+\tau(\Delta_{5}Q_{2}\Delta_{6}^{T}+\Delta_{6}Q_{2}\Delta_{5}^{T})
+\Theta_{1}Q_{4}\Theta_{1}^{T}\\
&+\sigma(\Theta_{1}Q_{4}\Theta_{2}^{T}+\Theta_{2}Q_{4}\Theta_{1}^{T})-\Theta_{3}Q_{4}\Theta_{3}^{T}\\&
-\sigma(\Theta_{3}Q_{4}\Theta_{2}^{T}+\Theta_{2}Q_{4}\Theta_{3}^{T})+\Theta_{4}Q_{4}\Theta_{6}^{T}\\&+\Theta_{6}Q_{4}\Theta_{4}^{T}
+\sigma(\Theta_{5}Q_{4}\Theta_{6}^{T}+\Theta_{6}Q_{4}\Theta_{5}^{T}),
\end{array}
$$
$$
\Phi_{3}=e_{7}Q_{5}e_{7}^{T}-(1-\mu_{2})e_{8}Q_{5}e_{8}^{T},
$$
$$\renewcommand{\arraystretch}{1.5}
\begin{array}{rl}
\Phi_{4}(\tau,\sigma)\hspace*{-1mm}= &\hspace*{-3mm}\Phi_{41}-\Phi_{42}(\tau)-\Phi_{43}(\sigma)\\
&\hspace*{-3mm}-[\Delta_{7} \ \Delta_{8}]\hat{R}_1[\Delta_{7} \ \Delta_{8}]^{T}-[\Theta_{7} \ \Theta_{8}]\hat{R}_2[\Theta_{7} \ \Theta_{8}]^{T},
\end{array}
$$
$$
\Phi_{41}=\bar{\tau}^{2}e_{9}R_{1}e_{9}^{T}+\bar{\sigma}^{2}e_{10}R_{2}e_{10}^{T}+\bar{\tau}^{2}e_{1}R_{3}e_{1}^{T}+\bar{\sigma}^{2}e_{4}R_{4}e_{4}^{T},
$$
$$
\Phi_{42}(\tau)=\bar{\tau}(\bar{\tau}-\tau)e_{12}R_{3}e_{12}^{T}+\bar{\tau}\tau e_{11}R_{3}e_{11}^{T},
$$
$$
\Phi_{43}(\tau)=\bar{\sigma}(\bar{\sigma}-\sigma)e_{14}R_{4}e_{14}^{T}+\bar{\sigma}\sigma e_{13}R_{4}e_{13}^{T},
$$
$$\renewcommand{\arraystretch}{1.5}
\begin{array}{rl}
\Phi_{5}(\tau,\sigma)=&\Phi_{51}-\Phi_{52}-\Phi_{53}\\
&-\frac{(\overline{\tau}-\tau)}{\overline{\tau}}\Delta_{8}\tilde{M}_{1}\Delta^{T}_{8}-\frac{(\overline{\sigma}-\sigma)}{\overline{\sigma}}\Theta_{8}\tilde{M}_{2}\Theta^{T}_{8},
\end{array}
$$
$$\renewcommand{\arraystretch}{1.5}
\Phi_{51}=\frac{\bar{\tau}^{2}}{2}e_{9}M_{1}e_{9}^{T}+\frac{\bar{\sigma}^{2}}{2}e_{10}M_{2}e_{10}^{T},
$$
$$\renewcommand{\arraystretch}{1.5}
\begin{array}{rl}
\Phi_{52}=&(e_{1}-e_{11})M_{1}(e_{1}-e_{11})^{T}\\
&+(e_{3}-e_{12})M_{1}(e_{3}-e_{12})^{T},
\end{array}
$$
$$\renewcommand{\arraystretch}{1.5}
\begin{array}{rl}
\Phi_{53}=&(e_{4}-e_{13})M_{2}(e_{4}-e_{13})^{T}\\
&+(e_{6}-e_{14})M_{2}(e_{6}-e_{14})^{T},
\end{array}
$$
$$
\Delta_{1}=[e_{1} \ \ \bar{\tau} e_{12}],\
\Delta_{2}=[e_{0} \ \ e_{11}-e_{12}],
$$
$$
\Delta_{3}=[e_{2} \ \ \bar{\tau} e_{12}],\
\Delta_{4}=[\bar{\tau} e_{12} \ \ \bar{\tau}^{2}e_{12}],
$$
$$
\Delta_{5}=[e_{11}-e_{12} \ \ \bar{\tau}(e_{11}-e_{12})],\
\Delta_{6}=[e_{0} \ \ e_{1}-e_{2}],
$$
$$
\Delta_{7}=[e_{3}-e_{2} \ \ e_{3}+e_{2}-2e_{12} ],
$$
$$
\Delta_{8}=[e_{1}-e_{3} \ \ e_{1}+e_{3}-2e_{11}],
$$
$$
\Theta_{1}=[e_{4} \ \ \bar{\sigma} e_{14}],\
\Theta_{2}=[e_{0} \ \ e_{13}-e_{14}],
$$
$$
\Theta_{3}=[e_{5} \ \ \bar{\sigma} e_{14}],\
\Theta_{4}=[\bar{\sigma} e_{14} \ \ \bar{\sigma}^{2}e_{14}],
$$
$$
\Theta_{5}=[e_{13}-e_{14} \ \ \bar{\sigma}(e_{13}-e_{14})],\
\Theta_{6}=[e_{0} \ \ e_{4}-e_{5}],
$$
$$
\Theta_{7}=[e_{6}-e_{5} \ \ e_{6}+e_{5}-2e_{14} ],
$$
$$
\Theta_{8}=[e_{4}-e_{6} \ \ e_{4}+e_{6}-2e_{13}],
$$
$$
\tilde{R}_{1}=\mathrm{diag}(R_{1},3R_{1}),\
\tilde{R}_{2}=\mathrm{diag}(R_{2},3R_{2}),
$$
$$
\tilde{M}_{1}=\frac{1}{\bar{\tau}}\mathrm{diag}(M_{1},3M_{1}),\
\tilde{M}_{2}=\frac{1}{\bar{\sigma}}\mathrm{diag}(M_{2},3M_{2}),
$$
$$D_{L}=\mathrm{diag}\left(\sum^{l}_{k=1}\frac{D_{1k}}{L^{2}_{k}},\sum^{l}_{k=1}\frac{D_{2k}}{L^{2}_{k}},\ldots,\sum^{l}_{k=1}\frac{D_{nk}}{L^{2}_{k}}\right),$$
$$D^{*}_{L}=\mathrm{diag}\left(\sum^{l}_{k=1}\frac{D^{*}_{1k}}{L^{2}_{k}},\sum^{l}_{k=1}\frac{D^{*}_{2k}}{L^{2}_{k}},\ldots,\sum^{l}_{k=1}\frac{D^{*}_{nk}}{L^{2}_{k}}\right),$$
and $L_{k}$, $D_{ik}$, $D^{*}_{ik}$, $A$, $B$, $C$, $W$ and $K$ are the same with previous ones.

Moreover, the observer gain matrices are given by $K_{1}=P_{1}^{-1}W_{1}$ and $K_{2}=P_{2}^{-1}W_{2}$.
\end{theorem}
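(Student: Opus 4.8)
The plan is to construct a Lyapunov--Krasovskii functional (LKF) $V(t)=\sum_{i=1}^{5}V_{i}(t)$ in which every $V_{i}$ is an integral over $\Omega$ of a classical delay--dependent block, chosen so that its time derivative reproduces one of the summands of $\Phi$. Concretely I would take $V_{1}=\int_{\Omega}\bigl[m^{T}P_{1}m+p^{T}P_{2}p+\sum_{k=1}^{l}(\partial_{x_{k}}m)^{T}P_{1}D_{k}(\partial_{x_{k}}m)+\sum_{k=1}^{l}(\partial_{x_{k}}p)^{T}P_{2}D^{*}_{k}(\partial_{x_{k}}p)\bigr]\mathrm{d}x$ to carry the diffusion data and the matrices $P_{1},P_{2}$; single--delay terms $\int_{\Omega}\int_{t-\tau(t)}^{t}m^{T}Q_{1}m\,\mathrm{d}s\,\mathrm{d}x$, $\int_{\Omega}\int_{t-\sigma(t)}^{t}p^{T}Q_{3}p\,\mathrm{d}s\,\mathrm{d}x$ and $\int_{\Omega}\int_{t-\sigma(t)}^{t}\bar f^{T}(p(s,x))Q_{5}\bar f(p(s,x))\,\mathrm{d}s\,\mathrm{d}x$ for $Q_{1},Q_{3}$ and $\Phi_{3}$; the new $\varphi$-- and $\psi$--augmented terms (involving the extra states $\int_{t-\bar\tau}^{t}m$, $\int_{t-\bar\sigma}^{t}p$) that generate the $Q_{2},Q_{4}$ blocks of $\Phi_{2}$ and their affine $\tau,\sigma$ dependence; the double--integral terms $\bar\tau\int_{\Omega}\!\int_{-\bar\tau}^{0}\!\int_{t+\theta}^{t}\dot m^{T}R_{1}\dot m$, $\bar\tau\int_{\Omega}\!\int_{-\bar\tau}^{0}\!\int_{t+\theta}^{t} m^{T}R_{3} m$ and their $p$--analogues with $R_{2},R_{4}$ (producing $\Phi_{41},\Phi_{42},\Phi_{43}$); and the triple--integral terms $\int_{\Omega}\!\int_{-\bar\tau}^{0}\!\int_{\theta}^{0}\!\int_{t+\lambda}^{t}\dot m^{T}M_{1}\dot m$ plus the $p$ version for $\Phi_{5}$. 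Positivity of $V$ and a lower bound by $\int_{\Omega}\|(m,p)(t,x)\|^{2}\mathrm{d}x$ follow from the $P_{1},P_{2}$ and gradient terms, so the standard Lyapunov--Krasovskii argument applies once $\dot V<0$ is established.

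Differentiating $V$ along (\ref{equation-8}), the delicate contribution is that of the diffusion operators. For the first two summands of $V_{1}$ I substitute the error dynamics and integrate by parts in each $x_{k}$; the boundary terms vanish by the Dirichlet conditions, $P_{1}$ and $D_{k}$ commute because both are diagonal, and Wirtinger's inequality (Lemma~\ref{lemma2}) applied on each interval $|x_{k}|\le L_{k}$ turns $-2\int_{\Omega}\sum_{k}(\partial_{x_{k}}m)^{T}P_{1}D_{k}(\partial_{x_{k}}m)\mathrm{d}x$ into the bound $-\tfrac12\pi^{2}\int_{\Omega}m^{T}P_{1}D_{L}m\,\mathrm{d}x$, and likewise for $p$; together with the $-(A+K_{1}M)$, $-(C+K_{2}N)$, $W$, $B$ contributions and $W_{1}=P_{1}K_{1}$, $W_{2}=P_{2}K_{2}$ this yields $\Phi_{1}$. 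To introduce $\partial_{t}m,\partial_{t}p$ as the independent vectors $e_{9},e_{10}$ of $\varsigma$, I append the error equations as the zero term $0=2\int_{\Omega}(\partial_{t}m)^{T}P_{1}\bigl[-\partial_{t}m+\sum_{k}\partial_{x_{k}}(D_{k}\partial_{x_{k}}m)-(A+K_{1}M)m+W\bar f(p(t-\sigma(t)))\bigr]\mathrm{d}x$ and its $p$--analogue; by Lemma~\ref{lemma5} together with Green's identity the diffusion part of this identity equals $-\frac{\mathrm d}{\mathrm dt}\int_{\Omega}\sum_{k}(\partial_{x_{k}}m)^{T}P_{1}D_{k}(\partial_{x_{k}}m)\mathrm{d}x$, which cancels exactly the gradient part of $\dot V_{1}$, leaving the $e_{9},e_{10}$ blocks of $\Phi_{0}$. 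Finally I add the sector slacks $-2\bar f^{T}(p(t,\cdot))\Lambda_{1}[\bar f(p(t,\cdot))-Kp(t,\cdot)]\ge 0$ and $-2\bar f^{T}(p(t-\sigma(t),\cdot))\Lambda_{2}[\bar f(\cdot)-Kp(t-\sigma(t),\cdot)]\ge 0$, legitimate by (\ref{equation-5}), which supply the remaining $\Lambda_{1},\Lambda_{2}$ blocks of $\Phi_{0}$.

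The remaining functionals are differentiated routinely. The $Q_{1},Q_{3},Q_{5}$ single integrals give, using $\dot\tau\le\mu_{1}$, $\dot\sigma\le\mu_{2}$ from (\ref{equation-2}), the $Q_{1},Q_{3}$ lines of $\Phi_{2}$ and all of $\Phi_{3}$; the $\varphi,\psi$--augmented terms give the $Q_{2},Q_{4}$ blocks, whose $\tau,\sigma$ dependence is linear because only the integration limits move. Differentiating the $R$-- and $M$--terms leaves $-\bar\tau\int_{t-\bar\tau}^{t}\dot m^{T}R_{1}\dot m$, $-\bar\tau\int_{t-\bar\tau}^{t}m^{T}R_{3}m$ (and the $p$ analogues) and a double integral tied to $M_{1},M_{2}$; I split each at $t-\tau(t)$ (resp. $t-\sigma(t)$), bound the $R_{3}$--pieces by Jensen's inequality (Lemma~\ref{lemma3}) and the $R_{1}$--pieces and the double integral by the Wirtinger--based integral inequality (Lemma~\ref{2015-lemma-1}), and then glue the reciprocals $\bar\tau/\tau(t)$ and $\bar\tau/(\bar\tau-\tau(t))$ (resp. in $\sigma$) via the reciprocally convex combination lemma (Lemma~\ref{lemma6}), whose feasibility requirement is exactly $\hat R_{1},\hat R_{2}\ge 0$ in (\ref{equation-10}); this produces the $[\Delta_{7}\ \Delta_{8}]\hat R_{1}[\Delta_{7}\ \Delta_{8}]^{T}$, $[\Theta_{7}\ \Theta_{8}]\hat R_{2}[\Theta_{7}\ \Theta_{8}]^{T}$ and $\tilde M_{1},\tilde M_{2}$ terms of $\Phi_{4},\Phi_{5}$. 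Collecting all pieces gives $\dot V(t)\le\int_{\Omega}\varsigma^{T}(t,x)\,\Phi(\tau(t),\sigma(t))\,\varsigma(t,x)\,\mathrm{d}x$ with $\varsigma$ as defined before the theorem.

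Every $\tau$-- and $\sigma$--dependent block above enters $\Phi(\tau,\sigma)$ linearly and there is no $\tau\sigma$ cross term (the RCC/Wirtinger steps have removed the denominators $\tau(t),\bar\tau-\tau(t)$), so $\Phi$ is affine on $[0,\bar\tau]\times[0,\bar\sigma]$ and attains its maximum at a vertex; hence the four vertex LMIs (\ref{equation-11}) imply $\Phi(\tau(t),\sigma(t))<0$ for all admissible delays, whence $\dot V(t)\le-\varepsilon\int_{\Omega}\|\varsigma(t,x)\|^{2}\mathrm{d}x$ and the trivial solution of (\ref{equation-8}) is asymptotically stable under the Dirichlet conditions; the gains come out as $K_{1}=P_{1}^{-1}W_{1}$, $K_{2}=P_{2}^{-1}W_{2}$ from $W_{1}=P_{1}K_{1}$, $W_{2}=P_{2}K_{2}$ and the invertibility of $P_{1},P_{2}>0$. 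I expect the main obstacle to be the intertwining of the reaction--diffusion bookkeeping with the time--varying--delay splitting: one must repeatedly integrate by parts in every $x_{k}$ with the Dirichlet conditions killing all boundary terms, exploit the diagonal structure of $P_{1},P_{2}$ so that they commute with $D_{k},D^{*}_{k}$ when invoking Lemma~\ref{lemma5}, and then verify that the numerous integral cross--terms spawned by the $\varphi,\psi$--augmented and triple--integral functionals reassemble precisely into $\Phi_{2},\Phi_{4},\Phi_{5}$ after the Jensen / Wirtinger / reciprocally convex steps --- lengthy but mechanical once the functional is fixed.
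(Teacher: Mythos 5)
Your proposal follows essentially the same route as the paper's proof: the identical five-part Lyapunov--Krasovskii functional, Green's formula with the Dirichlet conditions plus Wirtinger's inequality for the reaction--diffusion terms, the zero-equality trick (via Lemma~\ref{lemma5}) to introduce $\partial_t m,\partial_t p$ and cancel the gradient terms, the sector bounds with $\Lambda_1,\Lambda_2$, the Jensen / Wirtinger-based / reciprocally convex estimates for the delay integrals under condition (\ref{equation-10}), and the affine-in-$(\tau,\sigma)$ vertex argument, ending with $K_1=P_1^{-1}W_1$, $K_2=P_2^{-1}W_2$. The only cosmetic discrepancy is in the $V_5$ bookkeeping: in the paper the two double-integral pieces are bounded by the double-integral form of Jensen's inequality (yielding $\Phi_{52},\Phi_{53}$) while only the leftover single integral uses the Wirtinger-based bound, whose factor $(\overline{\tau}-\tau)/\overline{\tau}$ is retained for the convexity step rather than absorbed by the RCC lemma, but this does not alter the argument.
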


\begin{proof}
Construct a Lyapunov-Krasovskii functional for error system (\ref{equation-8}) as follows:
$$V(t,m,p)=\sum^{5}_{i=1}V_{i}(t,m,p),$$where
$$
\begin{array}{rl}
V_{1}(t,m,p)=
&\hspace{-3mm}\int_{\Omega}m^{T}(t,x)P_{1}m(t,x)\mathrm{d}x\\
&\hspace{-3mm}+\int_{\Omega}p^{T}(t,x)P_{2}p(t,x)\mathrm{d}x\\
&\hspace{-3mm}+\sum_{k=1}^l\int_{\Omega}\frac{\partial m^{T}(t,x)}{\partial x_k}P_1D_{k}\frac{\partial m(t,x)}{\partial x_k}\mathrm{d}x\\
&\hspace{-3mm}+\sum_{k=1}^l\int_{\Omega}\frac{\partial p^{T}(t,x)}{\partial x_k}P_2D^{*}_{k}\frac{\partial p(t,x)}{\partial x_k}\mathrm{d}x,
\end{array}
$$
$$
\begin{array}{rl}
V_{2}(t,m,p)=&\hspace{-3mm}\int_{\Omega}\int^{t}_{t-\tau(t)}m^{T}(s,x)Q_{1}m(s,x)\mathrm{d}s\mathrm{d}x\\
&\hspace{-3mm}+\int_{\Omega}\int^{t}_{t-\overline{\tau}}\varphi^{T}(t,s,x)Q_{2}\varphi(t,s,x)\mathrm{d}s\mathrm{d}x\\
&\hspace{-3mm}+\int_{\Omega}\int^{t}_{t-\sigma(t)}p^{T}(s,x)Q_{3}p(s,x)\mathrm{d}s\mathrm{d}x\\
&\hspace{-3mm}+\int_{\Omega}\int^{t}_{t-\overline{\sigma}}\psi^{T}(t,s,x)Q_{4}\psi(t,s,x)\mathrm{d}s\mathrm{d}x,
\end{array}
$$
$$V_{3}(t,m,p)=\int_{\Omega}\int^{t}_{t-\sigma(t)}\bar{f}^{T}(p(s,x))Q_{5}\bar{f}(p(s,x))\mathrm{d}s\mathrm{d}x,$$
$$
\begin{array}{rl}
V_{4}(t,m,p)=&\hspace{-3mm}\overline{\tau}\int_{\Omega}\int^{0}_{-\overline{\tau}}\int^{t}_{t+\theta}\frac{\partial m^{T}(s,x)}{\partial s}R_{1}\frac{\partial m(s,x)}{\partial s}\mathrm{d}s\mathrm{d}\theta\mathrm{d}x
\\
&\hspace{-3mm}+\overline{\sigma}\int_{\Omega}\int^{0}_{-\overline{\sigma}}\int^{t}_{t+\theta}\frac{\partial p^{T}(s,x)}{\partial s}R_{2}\frac{\partial p(s,x)}{\partial s}\mathrm{d}s\mathrm{d}\theta\mathrm{d}x\\
&\hspace{-3mm}+\overline{\tau}\int_{\Omega}\int^{0}_{-\overline{\tau}}\int^{t}_{t+\theta}m^{T}(s,x)R_{3}m(s,x)\mathrm{d}s\mathrm{d}\theta\mathrm{d}x
\\&\hspace{-3mm}+\overline{\sigma}\int_{\Omega}\int^{0}_{-\overline{\sigma}}\int^{t}_{t+\theta}p^{T}(s,x)R_{4}p(s,x)\mathrm{d}s\mathrm{d}\theta\mathrm{d}x,
\end{array}
$$
$$
\begin{array}{rl}
V_{5}(t,m,p)=&\hspace{-3mm}\int_{\Omega}\int^{0}_{-\overline{\tau}}\int^{0}_{\theta}\int^{t}_{t+\lambda}\hspace{-2mm}\frac{\partial m^{T}(s,x)}{\partial s}M_{1}\frac{\partial m(s,x)}{\partial s}\mathrm{d}s\mathrm{d}\lambda\mathrm{d}\theta\mathrm{d}x\\
&\hspace{-3mm}\int_{\Omega}\int^{0}_{-\overline{\sigma}}\int^{0}_{\theta}\int^{t}_{t+\lambda}\hspace{-2mm}\frac{\partial p^{T}(s,x)}{\partial s}M_{2}\frac{\partial p(s,x)}{\partial s}\mathrm{d}s\mathrm{d}\lambda\mathrm{d}\theta\mathrm{d}x.
\end{array}
$$
Taking the time derivatives of $V_{i}(t,m, p)$ $(i\in\langle5\rangle)$ along the trajectory of error system (\ref{equation-8}) yields
\begin{equation}\label{equation-12}
\begin{array}{rl}
&\frac{\partial}{\partial t}V_{1}(t,m,p)\\
=&2\int_{\Omega}m^{T}(t,x)P_{1}\Big[\sum^{l}_{k=1}\frac{\partial}{\partial x_{k}}\left(D_{k}\frac{\partial m(t,x)}{\partial x_{k}}\right)\\&-(A+K_{1}M)m(t,x)+W\bar{f}(p(t-\sigma(t),x))\Big]\mathrm{d}x\\&+2\int_{\Omega}p^{T}(t,x)P_{2}\Big[\sum^{l}_{k=1}\frac{\partial}{\partial x_{k}}\left(D^{*}_{k}\frac{\partial p(t,x)}{\partial x_{k}}\right)\\&-(C+K_{2}N)p(t,x)+Bm(t-\tau(t),x)\Big]\mathrm{d}x
\\&+2\sum^{l}_{k=1}\int_{\Omega}\frac{\partial m^{T}(t,x)}{\partial x_{k}}P_{1}D_{k}\frac{\partial }{\partial x_{k}}(\frac{\partial m(t,x)}{\partial t})\mathrm{d}x
\\&
+2\sum^{l}_{k=1}\int_{\Omega}\frac{\partial p^{T}(t,x)}{\partial x_{k}}P_{2}D^{*}_{k}\frac{\partial }{\partial x_{k}}(\frac{\partial p(t,x)}{\partial t})\mathrm{d}x,
\end{array}
\end{equation}
\begin{equation}\label{equation-13}\renewcommand{\arraystretch}{1.5}
\begin{array}{rl}
&\frac{\partial}{\partial t}V_{2}(t,m,p)\\
=&\int_{\Omega}m^{T}(t,x)Q_{1}m(t,x)\mathrm{d}x\\&-(1-\dot{\tau}(t))\int_{\Omega}m^{T}(t-\tau(t),x) Q_{1}m(t-\tau(t),x)\mathrm{d}x\\&+\int_{\Omega}p^{T}(t,x)Q_{3}p(t,x)\mathrm{d}x\\
&-(1-\dot{\sigma}(t))\int_{\Omega}p^{T}(t-\sigma(t),x)Q_{3}p(t-\sigma(t),x)\mathrm{d}x\\&
+\int_{\Omega}\varphi^{T}(t,t,x)Q_{2}\varphi(t,t,x)\mathrm{d}x
\\&-\int_{\Omega}\varphi^{T}(t,t-\overline{\tau},x)Q_{2}\varphi(t,t-\overline{\tau},x)\mathrm{d}x\\&
+2\int_{\Omega}\int^{t}_{t-\overline{\tau}}\varphi^{T}(t,s,x)Q_{2}\frac{\partial \varphi(t,s,x)}{\partial t}\mathrm{d}s\mathrm{d}x
\\&
+\int_{\Omega}\psi^{T}(t,t,x)Q_{4}\psi(t,t,x)\mathrm{d}x
\\&-\int_{\Omega}\psi^{T}(t,t-\overline{\sigma},x)Q_{4}\psi(t,t-\overline{\sigma},x)\mathrm{d}x,
\\&
+2\int_{\Omega}\int^{t}_{t-\overline{\sigma}}\psi^{T}(t,s,x)Q_{4}\frac{\partial \psi(t,s,x)}{\partial t}\mathrm{d}s\mathrm{d}x\\
\leq&\int_{\Omega}\varsigma^{T}(t,x)\Phi_{2}(\tau(t),\sigma(t))
\varsigma(t,x)\mathrm{d}x,
\end{array}
\end{equation}
\begin{equation}\label{equation-14}\renewcommand{\arraystretch}{1.5}
\begin{array}{rl}
&\frac{\partial}{\partial t}V_{3}(t,m,p)\\
=&\hspace{-3mm}-(1-\dot{\sigma}(t))\hspace{-2mm}\int_{\Omega}\bar{f}^{T}(p(t-\sigma(t),x)) Q_{5}\bar{f}(p(t-\sigma(t),x))\mathrm{d}x\\
&\hspace{-3mm}+\int_{\Omega}\bar{f}^{T}(p(t,x))Q_{5}\bar{f}(p(t,x))\mathrm{d}x
\\
\leq&\hspace{-3mm}\int_{\Omega}\varsigma^{T}(t,x)
\Phi_{3}
\varsigma(t,x)\mathrm{d}x,
\end{array}
\end{equation}
\begin{equation}\label{equation-15}\renewcommand{\arraystretch}{1.5}
\begin{array}{rl}
\frac{\partial}{\partial t}V_{4}(t,m,p)=&\overline{\tau}^{2}\int_{\Omega} \frac{\partial m^{T}(t,x)}{\partial t}R_{1}\frac{\partial m(t,x)}{\partial t}\mathrm{d}x
\\&-\overline{\tau}\int_{\Omega}\int^{t}_{t-\overline{\tau}}\frac{\partial m^{T}(s,x)}{\partial s}R_{1}\frac{\partial m(s,x)}{\partial s}\mathrm{d}s\mathrm{d}x
\\&+\overline{\sigma}^{2}\int_{\Omega} \frac{\partial p^{T}(t,x)}{\partial t}R_{2}\frac{\partial p(t,x)}{\partial t}\mathrm{d}x
\\&-\overline{\sigma}\int_{\Omega}\int^{t}_{t-\overline{\sigma}}\frac{\partial p^{T}(s,x)}{\partial s}R_{2}\frac{\partial p(s,x)}{\partial s}\mathrm{d}s\mathrm{d}x\\
&+\overline{\tau}^{2}\int_{\Omega}m^{T}(t,x)R_{3} m(t,x)\mathrm{d}x
\\&-\overline{\tau}\int_{\Omega}\int^{t}_{t-\overline{\tau}}m^{T}(s,x)R_{3} m(s,x)\mathrm{d}s\mathrm{d}x
\\&+\overline{\sigma}^{2}\int_{\Omega} p^{T}(t,x)R_{4} p(t,x)\mathrm{d}x
\\&-\overline{\sigma}\int_{\Omega}\int^{t}_{t-\overline{\sigma}}p^{T}(s,x)R_{4} p(s,x)\mathrm{d}s\mathrm{d}x,
\end{array}
\end{equation}
\begin{equation}\label{equation-16}\renewcommand{\arraystretch}{1.5}
\begin{array}{rl}
\frac{\partial}{\partial t}V_{5}(t,m,p)=
&\hspace*{-3mm}\frac{\overline{\tau}^{2}}{2}\int_{\Omega} \frac{\partial m^{T}(t,x)}{\partial t}M_{1}\frac{\partial m(t,x)}{\partial t}\mathrm{d}x\\
&\hspace*{-3mm}-\int_{\Omega}\int^{0}_{-\overline{\tau}}\int^{t}_{t+\theta}\frac{\partial m^{T}(s,x)}{\partial s}M_{1}\frac{\partial m(s,x)}{\partial s}\mathrm{d}s\mathrm{d}\theta\mathrm{d}x\\
&\hspace*{-3mm}+\frac{\overline{\sigma}^{2}}{2}\int_{\Omega} \frac{\partial p^{T}(t,x)}{\partial t}M_{2}\frac{\partial p(t,x)}{\partial t}\mathrm{d}x\\
&\hspace*{-3mm}-\int_{\Omega}\int^{0}_{-\overline{\sigma}}\int^{t}_{t+\theta}\frac{\partial p^{T}(s,x)}{\partial s}M_{2}\frac{\partial p(s,x)}{\partial s}\mathrm{d}s\mathrm{d}\theta\mathrm{d}x.
\end{array}
\end{equation}

From Green formula, Dirichlet boundary conditions and Lemma \ref{lemma2}, we have
\begin{equation}\label{equation-17}\renewcommand{\arraystretch}{1.5}
\begin{array}{rl}
&2\sum^{l}_{k=1}\int_{\Omega}m^{T}(t,x)P_{1}\frac{\partial}{\partial x_{k}}\left(D_{k}\frac{\partial m(t,x)}{\partial x_{k}}\right)\mathrm{d}x\\=&2\sum^{l}_{k=1}\int_{\Omega}\frac{\partial}{\partial x_{k}}\left(m^{T}(t,x)P_{1}D_{k}\frac{\partial m(t,x)}{\partial x_{k}}\right)\mathrm{d}x\\&-2\sum^{l}_{k=1}\int_{\Omega}\frac{\partial m^{T}(t,x)}{\partial x_{k}}P_{1}D_{k}\frac{\partial m(t,x)}{\partial x_{k}}\mathrm{d}x\\=&2\sum^{l}_{k=1}\int_{\partial\Omega}\left(m^{T}(t,x)P_{1}D_{k}\frac{\partial m(t,x)}{\partial x_{k}}\right)^{l}_{k=1}\cdot \overline{n} \ \mathrm{d}s\\&-2\sum^{l}_{k=1}\int_{\Omega}\frac{\partial m^{T}(t,x)}{\partial x_{k}}P_{1}D_{k}\frac{\partial m(t,x)}{\partial x_{k}}\mathrm{d}x\\=&-2\sum^{l}_{k=1}\int_{\Omega}\frac{\partial m^{T}(t,x)}{\partial x_{k}}P_{1}D_{k}\frac{\partial m(t,x)}{\partial x_{k}}\mathrm{d}x\\\leq
&-\frac{\pi^{2}}{2}\int_{\Omega}m^{T}(t,x)P_{1}D_{L}m(t,x)\mathrm{d}x,
\end{array}
\end{equation}
where
$$ \renewcommand{\arraystretch}{2}
\begin{array}{rl}
&\left(m^{T}(t,x)P_{1}D_{k}\frac{\partial m(t,x)}{\partial x_{k}}\right)^{l}_{k=1}\\=&
\left(m^{T}(t,x)P_{1}D_{1}\frac{\partial m(t,x)}{\partial x_{1}}, \ldots, m^{T}(t,x)P_{1}D_{l}\frac{\partial m(t,x)}{\partial x_{l}}\right).
\end{array}
$$
Similarly, \begin{equation}\label{equation-18}
\begin{array}{rl}
&2\sum^{l}_{k=1}\int_{\Omega}p^{T}(t,x)P_{2}\frac{\partial}{\partial x_{k}}\left(D^{*}_{k}\frac{\partial p(t,x)}{\partial x_{k}}\right)\mathrm{d}x
\\\leq&-\frac{\pi^{2}}{2}\int_{\Omega}p^{T}(t,x)P_{2}D^{*}_{L}p(t,x)\mathrm{d}x.
\end{array}
\end{equation}
The combination of (\ref{equation-12}), (\ref{equation-17}) and (\ref{equation-18}) gives
\begin{equation}\label{equation-19}\renewcommand{\arraystretch}{1.5}
\begin{array}{rl}
&\frac{\partial}{\partial t}V_{1}(t,m,p)\\
=&2\int_{\Omega}m^{T}(t,x)P_{1}\left[-\frac{\pi^{2}}{4}D_{L}m(t,x)\right.\\&\left.-(A+K_{1}M)m(t,x)+W\bar{f}(p(t-\sigma(t),x))\right]\mathrm{d}x\\&+2\int_{\Omega}p^{T}(t,x)P_{2}\left[-\frac{\pi^{2}}{4}D^{*}_{L}p(t,x)\right.\\&\left.-(C+K_{2}N)p(t,x)+Bm(t-\tau(t),x)\right]\mathrm{d}x
\\&+2\sum^{l}_{k=1}\int_{\Omega}\frac{\partial m^{T}(t,x)}{\partial x_{k}}P_{1}D_{k}\frac{\partial }{\partial x_{k}}(\frac{\partial m(t,x)}{\partial t})\mathrm{d}x
\\&
+2\sum^{l}_{k=1}\int_{\Omega}\frac{\partial p^{T}(t,x)}{\partial x_{k}}P_{2}D^{*}_{k}\frac{\partial }{\partial x_{k}}(\frac{\partial p(t,x)}{\partial t})\mathrm{d}x\\
=&\int_{\Omega}\varsigma^{T}(t,x)\Phi_{1}
\varsigma(t,x)\mathrm{d}x\\&+2\sum^{l}_{k=1}\int_{\Omega}\frac{\partial m^{T}(t,x)}{\partial x_{k}}P_{1}D_{k}\frac{\partial }{\partial x_{k}}(\frac{\partial m(t,x)}{\partial t})\mathrm{d}x
\\&
+2\sum^{l}_{k=1}\int_{\Omega}\frac{\partial p^{T}(t,x)}{\partial x_{k}}P_{2}D^{*}_{k}\frac{\partial }{\partial x_{k}}(\frac{\partial p(t,x)}{\partial t})\mathrm{d}x.                                                                                                                                                        \end{array}
\end{equation}

Note that the second term on the right of (\ref{equation-15}) can be written as:
\begin{equation}\label{equation-20}
\begin{array}{rl}
&-\overline{\tau}\int_{\Omega}\int^{t}_{t-\overline{\tau}}\frac{\partial m^{T}(s,x)}{\partial s}R_{1}\frac{\partial m(s,x)}{\partial s}\mathrm{d}s\mathrm{d}x\\
=&-\overline{\tau}\int_{\Omega}\int^{t-\tau(t)}_{t-\overline{\tau}}\frac{\partial m^{T}(s,x)}{\partial s}R_{1}\frac{\partial m(s,x)}{\partial s}\mathrm{d}s\mathrm{d}x\\
&-\overline{\tau}\int_{\Omega}\int^{t}_{t-\tau(t)}\frac{\partial m^{T}(s,x)}{\partial s}R_{1}\frac{\partial m(s,x)}{\partial s}\mathrm{d}s\mathrm{d}x.
\end{array}
\end{equation}
Applying Lemma \ref{2015-lemma-1}, one can obtain that
$$
\begin{array}{rl}
&-\overline{\tau}\int_{\Omega}\int^{t-\tau(t)}_{t-\overline{\tau}}\frac{\partial m^{T}(s,x)}{\partial s}R_{1}\frac{\partial m(s,x)}{\partial s}\mathrm{d}s\mathrm{d}x\\
\leq&-\frac{\overline{\tau}}{\overline{\tau}-\tau(t)}\int_{\Omega}\varsigma^{T}(t,x)\Delta_{7}\tilde{R}_{1}\Delta_{7}^{T}\varsigma(t,x)\mathrm{d}x
\end{array}
$$
and
\begin{equation}\label{equation-22}\renewcommand{\arraystretch}{1.5}
\begin{array}{rl}
&-\overline{\tau}\int_{\Omega}\int^{t}_{t-\tau(t)}\frac{\partial m^{T}(s,x)}{\partial s}R_{1}\frac{\partial m(s,x)}{\partial s}\mathrm{d}s\mathrm{d}x\\
\leq&-\frac{\overline{\tau}}{\tau(t)}\int_{\Omega}\varsigma^{T}(t,x)\Delta_{8}\tilde{R}_{1}\Delta_{8}\varsigma(t,x)\mathrm{d}x.
\end{array}
\end{equation}
This, together with (\ref{equation-10}), (\ref{equation-20}) and Lemma \ref{lemma6}, implies that
\begin{equation}\label{equation-23}\renewcommand{\arraystretch}{1.5}
\begin{array}{rl}
&-\overline{\tau}\int_{\Omega}\int^{t}_{t-\overline{\tau}}\frac{\partial m^{T}(s,x)}{\partial s}R_{1}\frac{\partial m(s,x)}{\partial s}\mathrm{d}s\mathrm{d}x\\
\leq&-\int_{\Omega}\varsigma^{T}(t,x) [\Delta_{7} \ \Delta_{8}] \hat{R}_1[\Delta_{7} \ \Delta_{8}]^{T} \varsigma(t,x)\mathrm{d}x.
\end{array}
\end{equation}
Similarly,
\begin{equation}\label{equation-24}\renewcommand{\arraystretch}{1.5}
\begin{array}{rl}
&-\overline{\sigma}\int_{\Omega}\int^{t}_{t-\overline{\sigma}}\frac{\partial p^{T}(s,x)}{\partial s}R_{2}\frac{\partial p(s,x)}{\partial s}\mathrm{d}s\mathrm{d}x\\
\leq&-\int_{\Omega}\varsigma^{T}(t,x) [\Theta^{T}_{7} \ \Theta_{8}]\hat{R}_2[\Theta^{T}_{7} \ \Theta_{8}]^{T} \varsigma(t,x)\mathrm{d}x.
\end{array}
\end{equation}
On the other hand, by Lemma \ref{lemma3}, it yields
\begin{equation}\label{equation-25}\renewcommand{\arraystretch}{1.5}
\begin{array}{rl}
&\hspace*{-3mm}-\overline{\tau}\int_{\Omega}\int^{t}_{t-\overline{\tau}}m^{T}(s,x)R_{3} m(s,x)\mathrm{d}s\mathrm{d}x\\
=&\hspace*{-3mm}-\overline{\tau}\int_{\Omega}\int^{t-\tau(t)}_{t-\overline{\tau}}m^{T}(s,x)R_{3}m(s,x)\mathrm{d}s\mathrm{d}x\\
&\hspace*{-3mm}-\overline{\tau}\int_{\Omega}\int^{t}_{t-\tau(t)}m^{T}(s,x)R_{3}m(s,x)\mathrm{d}s\mathrm{d}x\\
\leq&\hspace*{-3mm}-\frac{\overline{\tau}}{\overline{\tau}-\tau(t)}\int_{\Omega}\int^{t-\tau(t)}_{t-\overline{\tau}} m^{T}(s,x)\mathrm{d}sR_{3}\int^{t-\tau(t)}_{t-\overline{\tau}} m(s,x)\mathrm{d}s\mathrm{d}x\\
&\hspace*{-3mm}-\frac{\overline{\tau}}{\tau(t)}\int_{\Omega}\int^{t}_{t-\tau(t)} m^{T}(s,x)\mathrm{d}sR_{3}\int^{t}_{t-\tau(t)} m^{T}(s,x)\mathrm{d}s\mathrm{d}x\\
\leq&\hspace*{-3mm}-\int_{\Omega}\varsigma^{T}(t,x)
\Phi_{42}(\tau(t))
\varsigma(t,x)\mathrm{d}x.
\end{array}
\end{equation}
In the same way,
\begin{equation}\label{equation-26}\renewcommand{\arraystretch}{1.5}
\begin{array}{rl}
&-\overline{\sigma}\int_{\Omega}\int^{t}_{t-\overline{\sigma}}p^{T}(s,x)R_{4} p(s,x)\mathrm{d}s\mathrm{d}x\\
\leq&-\int_{\Omega}\varsigma^{T}(t,x)\Phi_{43}(\sigma(t))
\varsigma(t,x)\mathrm{d}x.
\end{array}
\end{equation}
Combining (\ref{equation-15}), (\ref{equation-23})-(\ref{equation-26}) yields
\begin{equation}\label{equation-27}\renewcommand{\arraystretch}{1.5}
\begin{array}{rl}
\frac{\partial}{\partial t}V_{4}(t,m,p)\leq&\int_{\Omega}\varsigma^{T}(t,x) \Phi_{4}(\tau(t),\sigma(t))
\varsigma(t,x)\mathrm{d}x.
\end{array}
\end{equation}

The second term on the right of (\ref{equation-16}) can be divided into three parts:
\begin{equation}\label{equation-28}\renewcommand{\arraystretch}{1.5}
\begin{array}{rl}
 &-\int_{\Omega}\int^{0}_{-\overline{\tau}}\int^{t}_{t+\theta}\frac{\partial m^{T}(s,x)}{\partial s}M_{1}\frac{\partial m(s,x)}{\partial s}\mathrm{d}s\mathrm{d}\theta\mathrm{d}x\\
=&-\int_{\Omega}\int^{0}_{-\tau(t)}\int^{t}_{t+\theta}\frac{\partial m^{T}(s,x)}{\partial s}M_{1}\frac{\partial m(s,x)}{\partial s}\mathrm{d}s\mathrm{d}\theta\mathrm{d}x\\&
-\int_{\Omega}\int^{-\tau(t)}_{-\overline{\tau}}\int^{t-\tau(t)}_{t+\theta}\frac{\partial m^{T}(s,x)}{\partial s}M_{1}\frac{\partial m(s,x)}{\partial s}\mathrm{d}s\mathrm{d}\theta\mathrm{d}x\\
&-(\overline{\tau}-\tau(t))\int_{\Omega}\int^{t}_{t-\tau(t)}\frac{\partial m^{T}(s,x)}{\partial s}M_{1}\frac{\partial m(s,x)}{\partial s}\mathrm{d}s\mathrm{d}x.
\end{array}
\end{equation}
By using Lemma \ref{lemma3}, we can estimate the following inequalities
\begin{equation}\label{equation-29}\renewcommand{\arraystretch}{1.5}
\begin{array}{rl}
 &-\int_{\Omega}\int^{0}_{-\tau(t)}\int^{t}_{t+\theta}\frac{\partial m^{T}(s,x)}{\partial s}M_{1}\frac{\partial m(s,x)}{\partial s}\mathrm{d}s\mathrm{d}\theta\mathrm{d}x
\\&-\int_{\Omega}\int^{-\tau(t)}_{-\overline{\tau}}\int^{t-\tau(t)}_{t+\theta}\frac{\partial m^{T}(s,x)}{\partial s}M_{1}\frac{\partial m(s,x)}{\partial s}\mathrm{d}s\mathrm{d}\theta\mathrm{d}x\\
\leq&-\frac{2}{\tau(t)^{2}}\int_{\Omega}\int^{0}_{-\tau(t)}\int^{t}_{t+\theta}\frac{\partial m^{T}(s,x)}{\partial s}\mathrm{d}s\mathrm{d}\theta\mathrm{d}x M_{1}\\
&\times\int^{0}_{-\tau(t)}\int^{t}_{t+\theta}\frac{\partial m(s,x)}{\partial s}\mathrm{d}s\mathrm{d}\theta\mathrm{d}x\\&
-\frac{2}{(\overline{\tau}-\tau(t))^{2}}\int_{\Omega}\int^{-\tau(t)}_{-\overline{\tau}}\int^{t-\tau(t)}_{t+\theta}\frac{\partial m^{T}(s,x)}{\partial s}\mathrm{d}s\mathrm{d}\theta\mathrm{d}xM_{1}\\&\times
\int_{\Omega}\int^{-\tau(t)}_{-\overline{\tau}}\int^{t-\tau(t)}_{t+\theta}\frac{\partial m(s,x)}{\partial s}\mathrm{d}s\mathrm{d}\theta\mathrm{d}x\\
=&
-2\int_{\Omega}\varsigma^{T}(t,x)\Phi_{52}\varsigma(t,x)\mathrm{d}x.
\end{array}
\end{equation}
As in (\ref{equation-22}), the last term on the right of (\ref{equation-28}) can be bounded by applying the same procedure,
\begin{equation}\label{equation-30}\renewcommand{\arraystretch}{1.5}
\begin{array}{rl}
&-(\overline{\tau}-\tau(t))\int_{\Omega}\int^{t}_{t-\tau(t)}\frac{\partial m^{T}(s,x)}{\partial s}M_{1}\frac{\partial m(s,x)}{\partial s}\mathrm{d}s\mathrm{d}x\\ \leq &
-\frac{(\overline{\tau}-\tau(t))}{\overline{\tau}}\int_{\Omega}\varsigma^{T}(t,x)\Delta_{8}\tilde{M}_{1}\Delta^{T}_{8}\varsigma(t,x)\mathrm{d}x.
\end{array}
\end{equation}
In a similar manner,
\begin{equation}\label{equation-31}\renewcommand{\arraystretch}{1.5}
\begin{array}{rl}
 &-\int_{\Omega}\int^{0}_{-\sigma(t)}\int^{t}_{t+\theta}\frac{\partial P^{T}(s,x)}{\partial s}M_{2}\frac{\partial P(s,x)}{\partial s}\mathrm{d}s\mathrm{d}\theta\mathrm{d}x
\\&-\int_{\Omega}\int^{-\sigma(t)}_{-\overline{\sigma}}\int^{t-\sigma(t)}_{t+\theta}\frac{\partial p^{T}(s,x)}{\partial s}M_{2}\frac{\partial p(s,x)}{\partial s}\mathrm{d}s\mathrm{d}\theta\mathrm{d}x\\\leq&
-2\int_{\Omega}\varsigma^{T}(t,x)\Phi_{53}\varsigma(t,x)\mathrm{d}x,
\end{array}
\end{equation}
and
\begin{equation}\label{equation-32}\renewcommand{\arraystretch}{1.5}
\begin{array}{rl}
&-(\overline{\sigma}-\sigma(t))\int_{\Omega}\int^{t}_{t-\sigma(t)}\frac{\partial p^{T}(s,x)}{\partial s}M_{2}\frac{\partial p(s,x)}{\partial s}\mathrm{d}s\mathrm{d}x\\ \leq &
-\frac{(\overline{\sigma}-\sigma(t))}{\overline{\sigma}}\int_{\Omega}\varsigma^{T}(t,x)\Theta_{8}\tilde{M}_{2}\Theta^{T}_{8}\varsigma(t,x)\mathrm{d}x.
\end{array}
\end{equation}
Combining (\ref{equation-16}) and (\ref{equation-28})-(\ref{equation-32}), we can obtain
\begin{equation}\label{equation-33}\renewcommand{\arraystretch}{1.5}
\frac{\partial}{\partial t}V_{5}(t,m,p)\leq\int_{\Omega}\varsigma^{T}(t,x)\Phi_{5}(\tau(t),\sigma(t))\varsigma(t,x)\mathrm{d}x.
\end{equation}

It is also easy to see that
 \begin{equation}\label{equation-36}
\begin{array}{rl}
&2\int_{\Omega}\frac{\partial m^{T}(t,x)}{\partial t}P_{1}\left[\sum^{l}_{k=1}\frac{\partial}{\partial x_{k}}\left(D_{k}\frac{\partial m(t,x)}{\partial x_{k}}\right)-\right.\\
&(A+K_{1}M)m(t,x)+W\bar{f}(p(t-\sigma(t),x)\\
&\left.-\frac{\partial m(t,x)}{\partial t}\right]\mathrm{d}x=0
\end{array}
\end{equation}
and
\begin{equation}\label{equation-37}
\begin{array}{rl}
&2\int_{\Omega}\frac{\partial p^{T}(t,x)}{\partial t}P_{2}\left[\sum^{l}_{k=1}\frac{\partial}{\partial x_{k}}\left(D^{*}_{k}\frac{\partial p(t,x)}{\partial x_{k}}\right)-\right.\\&(C+K_{2}N)p(t,x)+Bm(t-\tau(t),x)\\
&\left.-\frac{\partial p(t,x)}{\partial t}\right]\mathrm{d}x=0.
\end{array}
\end{equation}
According to Lemma \ref{lemma5}, Green formula and Dirichlet boundary conditions, we have
\begin{equation}\label{equation-38}\renewcommand{\arraystretch}{1.5}
\begin{array}{rl}
&\hspace*{-3mm}2\int_{\Omega}\frac{\partial m^{T}(t,x)}{\partial t}P_{1}\sum^{l}_{k=1}\frac{\partial}{\partial x_{k}}\left(D_{k}\frac{\partial m(t,x)}{\partial x_{k}}\right)\mathrm{d}x\\
=&\hspace*{-3mm}2\int_{\Omega}m^{T}(t,x)P_{1}\sum^{l}_{k=1}\hspace*{-1mm}\frac{\partial}{\partial x_{k}}\left[D_{k}\frac{\partial}{\partial x_{k}}\left(\frac{\partial m(t,x)}{\partial t}\right)\right]\hspace*{-1mm}\mathrm{d}x\\
=&\hspace*{-3mm}-2\sum^{l}_{k=1}\int_{\Omega}\frac{\partial m^{T}(t,x)}{\partial x_{k}}P_{1}D_{k}\frac{\partial }{\partial x_{k}}(\frac{\partial m(t,x)}{\partial t})\mathrm{d}x.
\end{array}
\end{equation}
Similarly,
\begin{equation}\label{equation-39}\renewcommand{\arraystretch}{1.5}
\begin{array}{rl}
&2\int_{\Omega}\frac{\partial p^{T}(t,x)}{\partial t}P_{2}\sum^{l}_{k=1}\frac{\partial}{\partial x_{k}}\left(D^{*}_{k}\frac{\partial p(t,x)}{\partial x_{k}}\right)\mathrm{d}x\\=&2\int_{\Omega}p^{T}(t,x)P_{2}\sum^{l}_{k=1}\frac{\partial}{\partial x_{k}}\left[D^{*}_{k}\frac{\partial}{\partial x_{k}}\left(\frac{\partial p(t,x)}{\partial t}\right)\right]\mathrm{d}x\\=&-2\sum^{l}_{k=1}\int_{\Omega}\frac{\partial p^{T}(t,x)}{\partial x_{k}}P_{2}D^{*}_{k}\frac{\partial }{\partial x_{k}}(\frac{\partial p(t,x)}{\partial t})\mathrm{d}x.
\end{array}
\end{equation}

Finally, for the diagonal matrices $\Lambda_{1}>0$ and $\Lambda_{2}>0$, it can be obtained  from (\ref{equation-5}) that
\begin{equation}\label{equation-34}
\begin{array}{rl}
&2\bar{f}^{T}(p(t,x))\Lambda_{1} \bar{f}(p(t,x))\\-&2p^{T}(t,x)K\Lambda_{1} \bar{f}(p(t,x))\leq0,
\end{array}
\end{equation}
\begin{equation}\label{equation-35}
\begin{array}{rl}
&2\bar{f}^{T}(p(t-\sigma(t),x))\Lambda_{2} \bar{f}(p(t-\sigma(t),x))\\-&2p^{T}(t-\sigma(t),x)K\Lambda_{2} \bar{f}(p(t-\sigma(t),x))\leq0.
\end{array}
\end{equation}

From (\ref{equation-13}), (\ref{equation-14}), (\ref{equation-19}), (\ref{equation-27}) and (\ref{equation-33})-(\ref{equation-35}), one can obtain
$$
\begin{array}{rl}
\frac{\partial}{\partial t}V(t,m,p)&=\sum^{5}_{i=1}\frac{\partial}{\partial t}V_{i}(t,m,p)\\&\leq\int_{\Omega}\varsigma^{T}(t,x)\Phi(\tau(t),\sigma(t))\varsigma(t,x)\mathrm{d}x.\\
\end{array}
$$
Since $\Phi(\tau(t),\sigma(t))$ depends affinely on $\tau(t)$ and $\sigma(t)$, respectively, it follows from (\ref{equation-11}) that $\frac{\partial}{\partial t}V(t,m,p)<0$ for all $\tau(t)$ and $\sigma(t)$ satisfying (\ref{equation-2}). Therefore, the trivial solution of error system (\ref{equation-8}) is asymptotically stable. This completes the proof.
\end{proof}

We end the section by the following remarks on Theorem \ref{theorem1-1}.

\begin{remark}\label{15sep07-remark-1}
Compared with \cite{Zhou.Xu.Shen(2011),NCA-2011-507,001,Han.Zhang.Wang-CSSP}, the advantages of this paper are as follows:
\begin{enumerate}
  \item We introduce new integral items like
$$
\overline{\tau}\int_{\Omega}\int^{0}_{-\overline{\tau}}\int^{t}_{t+\theta}m^{T}(s,x)R_{3}m(s,x)\mathrm{d}s\mathrm{d}\theta\mathrm{d}x
$$
and
$$
\int_{\Omega}\int^{0}_{-\overline{\tau}}\int^{0}_{\theta}\int^{t}_{t+\lambda}\hspace{-2mm}\frac{\partial m^{T}(s,x)}{\partial s}M_{1}\frac{\partial m(s,x)}{\partial s}\mathrm{d}s\mathrm{d}\lambda\mathrm{d}\theta\mathrm{d}x
$$
into Lyapunov-Krasovskii functional and employ Wirt\-ing\-er-based integral inequality (instead of Jensen's inequality) to estimate the derivative of the second one,
which will get more accurate result.
  \item The so-called convex combination approach and RCC approach are employed simultaneously, which will improve the precision of estimation to the concentrations of mRNA and protein.
  \item The coefficients of some items in $\varsigma(t,x)$, like $\frac{1}{\tau(t)}$ and $\frac{1}{\overline{\tau}-\tau(t)}$, play a very important role in simplification of the LMI condition (\ref{equation-11}).
  \item We use $\varphi(t,s,x)$ and $\psi(t,s,x)$ instead of $m(s,x)$ and $p(s,x)$ in $V_2(t,m,p)$, respectively.
  This will highly maintain consistent with $V_3(t,m,p)$.
\end{enumerate}
\end{remark}

\begin{remark}
The approach proposed in this paper can easily be applied to establish a delay-dependent and delay-rate-dependent asymptotic stability criterion for GRN (\ref{equation-1}). Due to Remark \ref{15sep07-remark-1} above, the criterion is certainly less conservative than ones in \cite{NCA-2011-507,001,Han.Zhang.Wang-CSSP}.
\end{remark}

\section{ Illustrative examples}\label{2015}

In this section, two numerical examples are provided to demonstrate the effectiveness and applicability of the proposed state observer.

\begin{example}\label{1}
Consider GRN (\ref{equation-4}) with measurements (\ref{equation-6}), the deterministic parameters are given as:
$$A=\mathrm{diag}(0.2,1.1,1.2),\ B=\mathrm{diag}(1.0,0.4,0.7),$$
$$C=\mathrm{diag}(0.3,0.7,1.3),\ L_{1}=L_{2}=L_{3}=1,$$
$$
W=\left [\begin{array}{ccc}
0&0&-0.5\\-0.5&0&0\\0&-0.5&0
\end{array}\right],
$$
$$D_{1}=D_{2}=D_{3}=\mathrm{diag}(0.1,0.1,0.1),$$
$$D^{*}_{1}=D^{*}_{2}=D^{*}_{3}=\mathrm{diag}(0.2,0.2,0.2),$$
$$
M=\left [\begin{array}{ccc}
0.5& -0.6 &0\\0.3 &0.8&-0.2
\end{array}\right],
$$
$$
N=\left [\begin{array}{ccc}
0.7 &-0.25 &0.3\\0.4 &0.2 &-0.3
\end{array}\right].
$$
Here the regulation function is taken as $f(x) =\frac{x^{2}}{1+x^{2}}$. One can get (\ref{equation-5}) holds when $K =0.65I$.
When $\overline{\tau} = \overline{\sigma} = 3$ and $\mu_{1}=\mu_{2}=2$, by using the MATLAB YALMIP Toolbox, one can see that the LMIs
given in Theorem \ref{theorem1-1} are feasible with the following feasible
solution matrices. To save space, we only list some of
the feasible solution matrices as follows:
$$
P_{1}=\mathrm{diag}( 57.6506,44.1104,50.5774),
$$
$$
P_{2}=\mathrm{diag}(25.7909,39.4682,32.9357),
$$
$$
Q_{1}=\left[\begin{array}{ccc}1.3165&-0.0077&-0.0077\\
   -0.0077&2.2460&0.0907\\
   -0.0077 &0.0907&1.5577
\end{array}\right],
$$
$$
Q_{5}=\left[\begin{array}{ccc}  2.8401&0.1585&0.0103\\
    0.1585&5.6978&0.0748\\
    0.0103&0.0748&3.3839
\end{array}\right],
$$
$$
R_{1}=\left[\begin{array}{ccc}  5.8721&0.0099&0.0606\\
    0.0099&3.7715&-0.1517\\
    0.0606&-0.1517&4.2376
\end{array}\right],
$$
$$
R_{2}=\left[\begin{array}{ccc}    2.0575&-0.0035&0.0052\\
   -0.0035&3.6837&-0.1101\\
    0.0052&-0.1101&2.8021
\end{array}\right],
$$
$$
M_{1}=\left[\begin{array}{ccc}     1.6422& 0.0046&0.0301\\
    0.0046&1.4934&-0.0324\\
    0.0301&-0.0324&1.2873
\end{array}\right],
$$
$$
W_{1}=\left[\begin{array}{cc}     34.7528&25.0882\\
    9.8144&-15.2841\\
    4.3837&9.3021
\end{array}\right],
$$
$$
W_{2}=\left[\begin{array}{cc}      16.3285&18.6193\\
    5.3137&-9.3968\\
  -12.5043&22.4536
\end{array}\right].
$$
Moreover, we can get the corresponding observer gain matrices as follows:
$$
K_{1}=P^{-1}_{1}W_{1}=\left[\begin{array}{cc}     0.6028&0.4352\\
    0.2225&-0.3465\\
    0.0867&0.1839
\end{array}\right],
$$
$$
K_{2}=P^{-1}_{2}W_{2}=\left[\begin{array}{cc}     0.6331&0.7219\\
    0.1346&-0.2381\\
   -0.3797&0.6817
\end{array}\right].
$$
\end{example}

\begin{example}\label{2}
When $l=n=1$, GRN (\ref{equation-4}) is simplified into
 \begin{equation}\label{32}
\left\{\begin{array}{rl}
\frac{\partial \bar{m}(t,x)}{\partial t}=&\frac{\partial}{\partial x}\left(D_{1}\frac{\partial \bar{m}(t,x)}{\partial x}\right)-A\bar{m}(t,x)\\
&+Wf(\bar{p}(t-\sigma(t),x)),\\
\frac{\partial \bar{p}(t,x)}{\partial t}=&\frac{\partial}{\partial x}\left(D^{*}_{1}\frac{\partial \bar{p}(t,x)}{\partial x}\right)-C\bar{p}(t,x)\\
&+B\bar{m}(t-\tau(t),x).
\end{array}\right.
\end{equation}

We choose the values of parameters in (\ref{32}) are as follows,
$$A=0.2, \ B=1, \ C=0.3, \ L_{1}=1, $$ $$W=-0.5, \ D_{1}=0.1, \ D^{*}_{1}=0.2,$$
$$
M=0,\ \
N=0.7 .
$$
When $\mu_{1}=\mu_{2}=2, K=0.65$ and $\overline{\tau}=\overline{\sigma}=1,$ for Dirichlet boundary conditions, by using the MATLAB YALMIP Toolbox to solve the
LMIs given in Theorem \ref{theorem1-1}, we obtain the following feasible solution matrices. To save space, we only list some of
the feasible matrices as follows:
$$P_{1}=1.8102, \ \ Q_{1}=0.0120, $$
 $$ R_{1}=0.2847, \ \ M_{1}=0.1556,$$
 $$
W_{1}=0, \ \ \
W_{2}=1.1478.
$$
Moreover, we can get the corresponding observer gain matrices as follows:
$$
K_{1}=P^{-1}_{1}W_{1}=0,
$$
$$
K_{2}=P^{-1}_{2}W_{2}=1.5571.
$$
Further, when $\sigma(t)=\tau(t)=1$, the state responses of GRN (\ref{32}), observer (\ref{equation-7}) and the corresponding error system are given in Figures \ref{x}--\ref{yyy}.
\begin{figure}
  \centering
  \includegraphics[width=0.35\textwidth]{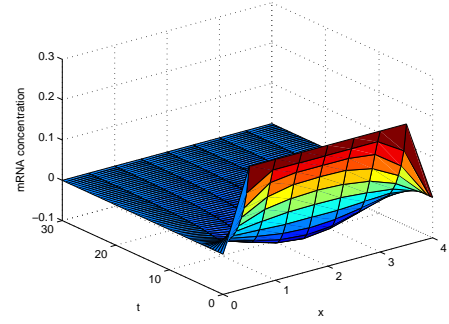}\\
  \caption{The real trajectory of mRNA ($\bar{m}(t,x)$) }\label{x}
\end{figure}
\begin{figure}
  \centering
  \includegraphics[width=0.35\textwidth]{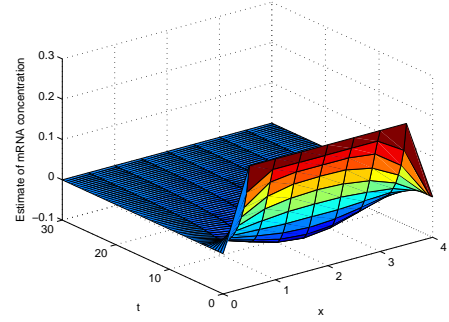}\\
  \caption{The estimated trajectory of mRNA ($\hat{m}(t,x)$)}\label{y}
\end{figure}
\begin{figure}
  \centering
  \includegraphics[width=0.35\textwidth]{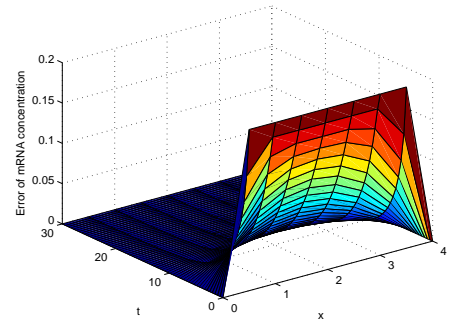}\\
  \caption{The estimation error of mRNA ($\bar{m}(t,x)-\hat{m}(t,x)$)}\label{xx}
\end{figure}
\begin{figure}
  \centering
  \includegraphics[width=0.35\textwidth]{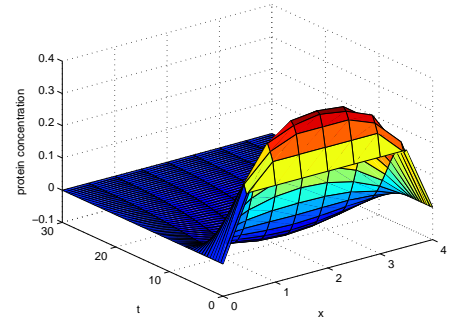}\\
  \caption{The real trajectory of protein ($\bar{p}(t,x)$)}\label{yy}
\end{figure}
\begin{figure}
  \centering
  \includegraphics[width=0.35\textwidth]{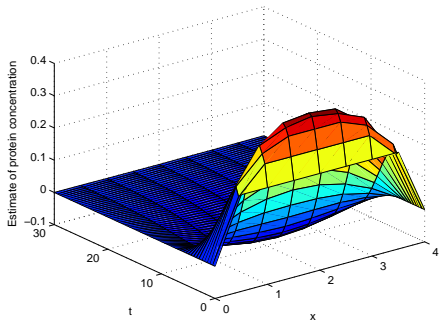}\\
  \caption{The estimated trajectory of protein ($\hat{p}(t,x)$) }\label{xxx}
\end{figure}
\begin{figure}
  \centering
  \includegraphics[width=0.35\textwidth]{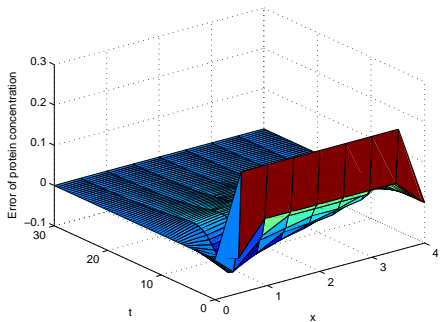}\\
  \caption{The estimation error of protein ($\bar{p}(t,x)-\hat{p}(t,x)$)}\label{yyy}
\end{figure}

\end{example}

\section{Conclusions}\label{2016}
In this paper, the state estimation problem for a class of GRNs with time-varying delays and reaction-diffusion terms are studied.
 An state observer is designed to estimate the gene states through available sensor measurements, and guarantee that the
error system is asymptotically stable. By introducing new integral terms in a novel Lyapunov--Krasovskii functional and employing the so-called Wirtinger-based integral inequality, Wirtinger's inequality, Green's second identity and, convex combination approach, RCC approach, a sufficient condition guaranteeing the existence of
state observers is established in terms of LMIs. The concrete expression of the desired state observer has been
presented in Theorem \ref{theorem1-1}. Finally, two numerical examples are given to illustrate the effectiveness of the theoretical results.






\begin{thebibliography}{10}
\providecommand{\url}[1]{#1}
\csname url@samestyle\endcsname
\providecommand{\newblock}{\relax}
\providecommand{\bibinfo}[2]{#2}
\providecommand{\BIBentrySTDinterwordspacing}{\spaceskip=0pt\relax}
\providecommand{\BIBentryALTinterwordstretchfactor}{4}
\providecommand{\BIBentryALTinterwordspacing}{\spaceskip=\fontdimen2\font plus
\BIBentryALTinterwordstretchfactor\fontdimen3\font minus
  \fontdimen4\font\relax}
\providecommand{\BIBforeignlanguage}[2]{{%
\expandafter\ifx\csname l@#1\endcsname\relax
\typeout{** WARNING: IEEEtran.bst: No hyphenation pattern has been}%
\typeout{** loaded for the language `#1'. Using the pattern for}%
\typeout{** the default language instead.}%
\else
\language=\csname l@#1\endcsname
\fi
#2}}
\providecommand{\BIBdecl}{\relax}
\BIBdecl

\bibitem{JCB2-2002-67}
H.~De~Jong, ``Modeling and simulation of genetic regulatory systems: a
  literature review,'' \emph{J. Comput. Biology}, vol.~9, no.~1, pp. 67--103,
  2002.

\bibitem{PSB-1999-17}
T.~Akutsu, S.~Miyano, S.~Kuhara \emph{et~al.}, ``Identification of genetic
  networks from a small number of gene expression patterns under the {B}oolean
  network model,'' in \emph{Pac. Symp. Biocomput.}, vol.~4.\hskip 1em plus
  0.5em minus 0.4em\relax World Scientific Maui, Hawaii, 1999, pp. 17--28.

\bibitem{PSB-1998-77}
D.~Thieffry and R.~Thomas, ``Qualitative analysis of gene networks.'' in
  \emph{Pac. Symp. Biocomput.}, vol.~3, 1998, pp. 77--88.

\bibitem{Cao.Ren(2008)}
J.~Cao and F.~Ren, ``Exponential stability of discrete-time genetic regulatory
  networks with delays,'' \emph{IEEE Trans. Neural Networks}, vol.~19, no.~3,
  pp. 520--523, 2008.

\bibitem{AAA-2014-257971}
Y.~T. Wang, X.~M. Zhou, and X.~Zhang, ``{$H_{\infty}$} filtering for
  discrete-time genetic regulatory networks with random delay described by a
  {M}arkovian chain,'' \emph{Abstr. Appl. Anal.}, vol. 2014, pp. Article ID
  257\,971, 12 pages, 2014.

\bibitem{MPE-2014-768483}
T.~T. Yu, J.~Wang, and X.~Zhang, ``A less conservative stability criterion for
  delayed stochastic genetic regulatory networks,'' \emph{Math. Probl. Eng.},
  vol. 2014, pp. Article ID 768\,483, 11 pages, 2014.

\bibitem{IEEE-T-CBB-2015-398}
X.~Zhang, L.~G. Wu, and S.~C. Cui, ``An improved integral to stability analysis
  of genetic regulatory networks with interval time-varying delays,''
  \emph{IEEE/ACM Trans. Comput. Biol. Bioinf.}, vol.~12, no.~2, pp. 398--409,
  2015.

\bibitem{Zhang.Wu.Zou-IEEE-T-CBB}
X.~Zhang, L.~G. Wu, and J.~H. Zou, ``Globally asymptotic stability analysis for
  genetic regulatory networks with mixed delays: an {M}-matrix-based
  approach,'' \emph{IEEE/ACM Trans. Comput. Biol. Bioinf.}, p.
  DOI:10.1109/TCBB.2015.2424432 (online), 2015.

\bibitem{IEEE-T-NNLS-2013-1957}
L.~Wu, Z.~Feng, and J.~Lam, ``Stability and synchronization of discrete-time
  neural networks with switching parameters and time-varying delays,''
  \emph{IEEE Trans. Neural Networks Learn. Syst.}, vol.~24, no.~12, pp.
  1957--1972, 2013.

\bibitem{IEEE-T-NNLS-2013-1177}
Z.~G. Wu, P.~Shi, H.~Y. Su, and J.~Chu, ``Sampled-data exponential
  synchronization of complex dynamical networks with time-varying coupling
  delay,'' \emph{IEEE Trans. Neural Networks Learn. Syst.}, vol.~24, no.~8, pp.
  1177--1187, 2013.

\bibitem{NN-2014-165}
J.~D. Cao and Y.~Wan, ``Matrix measure strategies for stability and
  synchronization of inertial {BAM} neural network with time delays,''
  \emph{Neural Networks}, vol.~53, pp. 165--172, 2014.

\bibitem{IEEE-T-NNLS-2012-293}
Z.~G. Zeng and W.~X. Zheng, ``Multistability of neural networks with
  time-varying delays and concave-convex characteristics,'' \emph{IEEE Trans.
  Neural Networks Learn. Syst.}, vol.~23, no.~2, pp. 293--305, 2012.

\bibitem{IEEE-T-2005-217}
D.~Yue and Q.~L. Han, ``{Delay-dependent exponential stability of stochastic
  systems with time-varying delay, nonlinearity, and Markovian switching},''
  \emph{IEEE Trans. Autom. Control}, vol.~50, no.~2, pp. 217--222, 2005.

\bibitem{Li.Gao.Shi(2010)}
H.~Li, H.~Gao, and P.~Shi, ``New passivity analysis for neural networks with
  discrete and distributed delays,'' \emph{IEEE Trans. Neural Networks},
  vol.~21, no.~11, pp. 1842--1847, 2010.

\bibitem{IEEE-T-IE-2012-2732}
P.~Shi, X.~Luan, and F.~Liu, ``{$H_{\infty}$} filtering for discrete-time
  systems with stochastic incomplete measurement and mixed delays,'' \emph{IEEE
  Trans. Ind. Electron.}, vol.~59, no.~6, pp. 2732--2739, 2012.

\bibitem{N-2015-199}
T.~T. Yu, X.~Zhang, G.~D. Zhang, and B.~Niu, ``Hopf bifurcation analysis for
  genetic regulatory networks with two delays,'' \emph{Neurocomputing}, vol.
  164, pp. 190--200, 2015.

\bibitem{CSSP-2014-371}
G.~D. Zhang, X.~Lin, and X.~Zhang, ``Exponential stabilization of neutral-type
  neural networks with mixed interval time-varying delays by intermittent
  control: a {CCL} approach,'' \emph{Circ. Syst. Signal Pr.}, vol.~33, pp.
  371--391, 2014.

\bibitem{IEEE-T-SMC-B-2009-467}
R.~Yang, H.~Gao, and P.~Shi, ``Novel robust stability criteria for stochastic
  {H}opfield neural networks with time delays,'' \emph{IEEE Trans. Syst. Man
  Cybern. Part B Cybern.}, vol.~39, no.~2, pp. 467--474, 2009.

\bibitem{N-2013}
X.~Zhang, A.~H. Yu, and G.~D. Zhang, ``M-matrix-based delay-range-dependent
  global asymptotical stability criterion for genetic regulatory networks with
  time-varying delays,'' \emph{Neurocomputing}, vol. 113, pp. 8--15, 2013.

\bibitem{NCA-2013}
Y.~T. Wang, A.~H. Yu, and X.~Zhang, ``Robust stability of stochastic genetic
  regulatory networks with time-varying delays: a delay fractioning approach,''
  \emph{Neural Comput. Appl.}, vol.~23, no.~5, pp. 1217--1227, 2013.

\bibitem{IEEE-T-AC-2013-475}
D.~Yue, E.~Tian, and Q.~L. Han, ``A delay system method for designing
  event-triggered controllers of networked control systems,'' \emph{IEEE Trans.
  Autom. Control}, vol.~58, no.~2, pp. 475--481, 2013.

\bibitem{IEEE-T-NN-2008-1299}
D.~Yue, Y.~Zhang, E.~Tian, and C.~Peng, ``Delay-distribution-dependent
  exponential stability criteria for discrete-time recurrent neural networks
  with stochastic delay,'' \emph{IEEE Trans. Neural Networks}, vol.~19, no.~7,
  pp. 1299--1306, 2008.

\bibitem{JMB-1985-313}
S.~Busenberg and J.~Mahaffy, ``Interaction of spatial diffusion and delays in
  models of genetic control by repression,'' \emph{J. Mol. Biol.}, vol.~22,
  no.~3, pp. 313--333, 1985.

\bibitem{britton1986reaction}
N.~F. Britton, \emph{Reaction-Diffusion Equations and Their Applications to
  Biology}.\hskip 1em plus 0.5em minus 0.4em\relax New York: Academic Press,
  1986.

\bibitem{Han.Zhang.Wang-CSSP}
Y.~Y. Han, X.~Zhang, and Y.~T. Wang, ``Asymptotic stability criteria for
  genetic regulatory networks with time-varying delays and reaction--diffusion
  terms,'' \emph{Circ. Syst. Signal Pr.}, vol.~34, pp. 3161--3690, 2015.

\bibitem{Zhou.Xu.Shen(2011)}
J.~Zhou, S.~Xu, and H.~Shen, ``Finite-time robust stochastic stability of
  uncertain stochastic delayed reaction-diffusion genetic regulatory
  networks,'' \emph{Neurocomputing}, vol.~74, no.~17, pp. 2790--2796, 2011.

\bibitem{NCA-2011-507}
Q.~Ma, G.~D. Shi, S.~Y. Xu, and Y.~Zou, ``Stability analysis for delayed
  genetic regulatory networks with reaction--diffusion terms,'' \emph{Neural
  Comput. Appl.}, vol.~20, no.~4, pp. 507--516, 2011.

\bibitem{001}
Y.~Y. Han and X.~Zhang, ``Stability analysis for delayed regulatory networks
  with reaction-diffusion terms (in {C}hinese),'' \emph{J. Natur. Sci.
  Heilongjiang Univ.}, vol.~31, no.~1, pp. 32--40, 2014.

\bibitem{CNSNS-2011-4060}
B.~Lv, J.~L. Liang, and J.~D. Cao, ``Robust distributed state estimation for
  genetic regulatory networks with {M}arkovian jumping parameters,''
  \emph{Commun. Nonlinear Sci. Numer. Simul.}, vol.~16, no.~10, pp. 4060--4078,
  2011.

\bibitem{JFI-2013-966}
X.~Lin, X.~Zhang, and Y.~T. Wang, ``Robust passive filtering for neutral-type
  neural networks with time-varying discrete and unbounded distributed
  delays,'' \emph{J. Franklin Inst.}, vol. 350, no.~5, pp. 966--989, 2013.

\bibitem{N-2015-168}
P.~Shi, Y.~Zhang, and R.~K. Agarwal, ``Stochastic finite-time state estimation
  for discrete time-delay neural networks with {M}arkovian jumps,''
  \emph{Neurocomputing}, vol. 151, pp. 168--174, 2015.

\bibitem{IEEE-T-NN-2005-279--284}
Z.~Wang, D.~W.~C. Ho, and X.~Liu, ``State estimation for delayed neural
  networks,'' \emph{IEEE Trans. Neural Networks}, vol.~21, no.~1, pp. 279--284,
  2005.

\bibitem{OCAM-2012-590}
X.~Zhang, X.~Lin, and Y.~T. Wang, ``Robust fault detection filter design for a
  class of neutral-type neural networks with time-varying discrete and
  unbounded distributed delays,'' \emph{Optim. Control Appl. Methods}, vol.~34,
  pp. 590--607, 2012.

\bibitem{Gu(2000)}
K.~Gu, ``An integral inequality in the stability problem of time-delay
  systems,'' in \emph{Proceedings of the 39th IEEE Conference on Decision and
  Control}, 2000, pp. 2805--2810.

\bibitem{A-2010-466}
J.~Sun, G.~P. Liu, J.~Chen, and D.~Rees, ``Improved delay-range-dependent
  stability criteria for linear systems with time-varying delays,''
  \emph{Automatica}, vol.~46, no.~2, pp. 466--470, 2010.

\bibitem{A-2013-2860}
A.~Seuret and F.~Gouaisbaut, ``Wirtinger-based integral inequality: application
  to time-delay systems,'' \emph{Automatica}, vol.~49, no.~9, pp. 2860--2866,
  2013.

\bibitem{kammler2007first}
D.~W. Kammler, \emph{A First Course in Fourier Analysis}.\hskip 1em plus 0.5em
  minus 0.4em\relax Cambridge: Cambridge University Press, 2007.

\bibitem{A-2011-235}
P.~G. Park, J.~W. Ko, and C.~Jeong, ``Reciprocally convex approach to stability
  of systems with time-varying delays,'' \emph{Automatica}, vol.~47, no.~1, pp.
  235--238, 2011.

\end{thebibliography}


\end{document}